\documentclass[11pt]{amsart}
\usepackage{fullpage} 
\usepackage{graphicx,pdfsync,amssymb,mathrsfs,amsfonts,amsbsy,color,esint,mathtools,tikz,mdframed}
\usetikzlibrary{positioning}
\usepackage[shortlabels]{enumitem}
\usepackage{hyperref}
\hypersetup{
	colorlinks=true,
	linkcolor=blue,
	filecolor=blue,
	urlcolor=blue,
	citecolor=blue,
}
\usepackage[utf8]{inputenc}
\usepackage[T1]{fontenc}
\usepackage{verbatim}
\newtheorem{theorem}{Theorem}[section]
\newtheorem{corollary}[theorem]{Corollary}
\newtheorem{lemma}[theorem]{Lemma}
\newtheorem{proposition}[theorem]{Proposition}

\theoremstyle{definition}

\newtheorem{remark}[theorem]{Remark}
\newtheorem{assumption}[theorem]{Assumption}
\setlist[enumerate]{itemsep=3pt}
\setlist[itemize]{itemsep=3pt}
\numberwithin{equation}{section}

\definecolor{green}{rgb}{0,0.8,0} 

\newcommand\R{\mathbb{R}}
\newcommand{\HH}{\mathbb{H}}
\newcommand\e{\epsilon}

\newcommand{\la}{\langle}
\newcommand{\ra}{\rangle}

\newcommand\loge{\frac{1}{ |\log \epsilon|}}

\newcommand{\nrm}[1]{\Vert#1\Vert}

\newcommand{\tld}[1]{\widetilde{#1}}

\newcommand{\nnrm}[1]{{\vert\kern-0.25ex\vert\kern-0.25ex\vert #1 
		\vert\kern-0.25ex\vert\kern-0.25ex\vert}}

\newcommand{\supp}{{\mathrm{supp}}\,}

\newcommand{\rd}{\partial}
\newcommand{\nb}{\nabla}

\newcommand{\gmm}{\gamma}
\newcommand{\Gmm}{\Gamma}

\newcommand{\eps}{\epsilon}

\newcommand{\lmb}{\lambda}

\newcommand{\Omg}{\Omega}

\newcommand{\zt}{\zeta}

\newcommand{\bbH}{\mathbb H}

\newcommand{\bbR}{\mathbb R}

\begin{document}

	\title[Dynamics for a single vortex ring]
	{Global dynamics of a single vortex ring} 

    \date{\today}
    
	\author[D. Guo]{Dengjun Guo}
	\address{Academy of Mathematics and Systems Science, Chinese Academy of Sciences, Beijing, China}
	\email{djguo@amss.ac.cn}
	\author[I.-J. Jeong]{In-Jee Jeong}
	\address{Department of Mathematical Sciences and RIM, Seoul National University, Seoul 08826 \\ and School of Mathematics, Korea Institute for Advanced Study, Seoul 02455, Republic of Korea}
	\email{galois.ij@gmail.com}
	\author[L. Zhao]{Lifeng Zhao}
	\address{School of Mathematical Sciences,
		University of Science and Technology of China, Hefei 230026, Anhui, China}
	\email{zhaolf@ustc.edu.cn}
	
	\begin{abstract}
		We study the global-in-time dynamics of vortex rings for the three-dimensional incompressible Euler equations, under the assumption of axisymmetric flows without swirl. 
	
		For a broad class of initial data sharing only the macroscopic invariants with a thin vortex ring, we prove that the vorticity remains sharply concentrated and propagates along the symmetry axis with leading-order speed given by the Kelvin--Hicks formula, providing the first global-in-time validation of the vortex filament conjecture for a single vortex ring arising from generic initial data.
		
		We further identify a universal filamentation mechanism driven by the competition between rapid core translation and slower local induction. This mechanism gives linear-in-time stretching of the vortex support under very general assumptions on the data, yielding dynamical instability of any thin vortex ring configurations in the $W^{2,\infty}$ norm.   
	\end{abstract}

    \renewcommand{\thefootnote}{\fnsymbol{footnote}}
	\footnotetext{\emph{2020 AMS Mathematics Subject Classification:} 76B47, 35Q35, 35B40}
    
	\maketitle
	 
	\section{Introduction}
	
	The study of vortex dynamics in ideal fluids is a classical subject dating back to Helmholtz \cite{Helmholtz1858} and Kelvin \cite{Kelvin1867}. A fundamental structure in this field is the \textit{vortex ring}, a toroidal region of vorticity propagating along its symmetry axis, which we take to be the $z$-direction. For a ring with radius $r_0$, circulation $\mu$, and core thickness $\eps \ll 1$, the classical Kelvin--Hicks formula (\cite{Kelvin1867b,Hicks1883}) gives the propagation speed 
	\begin{equation}\label{eq:Kelvin-speed}
		V = \frac{\mu}{4\pi r_0} \left( |\log \e| + O(1) \right) \mathbf{e}_z,
	\end{equation} where the $O(1)$
	term depends on the specific distribution of vorticity within the ring's cross-section \cite{Fraenkel1970}. The \textbf{vortex filament conjecture} predicts that such concentrated structures remain localized and follow the trajectory governed by \eqref{eq:Kelvin-speed}. While special steadily translating solutions (traveling waves) have been established starting with \cite{Norbury73,FraenkelBerger1974, Benjamin}, the conjecture for \textit{generic} initial data remained largely open.
	
	Regarding this issue, earlier works by Benedetto--Caglioti--Marchioro \cite{BenedettoCagliotiMarchioro2000} and Jerrard \cite{Jer} proved that vorticity remains concentrated near some point for all $t > 0$, if it is initially so. However, rigorous tracking of the concentration point along the $z$-axis, henceforth denoted by $z(t)$, has been limited to very short time scales $t \lesssim |\log \eps|^{-1}$. 
	
	In this paper, we establish the global-in-time dynamics for vortex rings with generic initial vorticity--that is, data constrained only by macroscopic invariants. Our results reveal the following:
	
	\begin{enumerate}
		\item \textbf{Global-in-time verification of the conjecture:} For any initial data sharing the macroscopic invariants of a thin vortex ring, the bulk of the vorticity remains concentrated and obeys the prediction $z(t) \sim \frac{\mu}{4\pi r_0}|\log \eps| t$ for all $t \in \mathbb{R}$.
		\item \textbf{Universal filamentation:} Critical thickness scale of $O(|\log \eps|^{-1})$ is identified: If the initial vortex support radius exceeds this scale, the velocity disparity between the vortex core and the periphery leads to linear-in-time filamentation, namely the creation of a long and thin tail behind the vortex ring. Notably, this filamentation behavior is shown to occur for a broad class of axisymmetric initial data sharing the macroscopic invariants of a thin vortex ring, without requiring proximity to a specific traveling wave. Furthermore, filamentation implies infinite growth of various quantities, including the maximum of the vorticity Hessian. 
	\end{enumerate}
	Our results are obtained for the axisymmetric Euler equations without swirl, which we now explain.

	\subsection{Axisymmetric Euler equation}

	The vorticity formulation of the three-dimensional incompressible Euler equations in $\bbR^{3}$ reads  \begin{equation}\label{eq euler 1}
		\left\{
		\begin{aligned}
			\partial_t \Omega + U\cdot \nabla \Omega=\Omega \cdot \nabla U, & \\
			U=\nabla \times \Delta^{-1}\Omega, &
		\end{aligned}
		\right.
	\end{equation} where $U(\cdot,t): \R^3  \to \R^3 $ and $\Omg(\cdot, t) = \nb \times U(\cdot,t) : \R^3  \to \R^3 $ represent the velocity and vorticity of the fluid, respectively. In this work, we study \textit{axisymmetric flows without swirl} in $\R^3$, for which the velocity and vorticity take the form  
	\[
	U=u^r(r,z,t) \mathbf{e}_r + u^z(r,z,t) \mathbf{e}_z,\qquad \Omega=w(r,z,t) \mathbf{e}_{\theta}
	\]
	in cylindrical coordinates. Under these assumptions, the vorticity equation \eqref{eq euler 1} reduces to
	\begin{equation}\label{eq axeuler}
		\partial_t w + \nabla \cdot (uw)=0
	\end{equation} on the half-plane $\mathbb{H}=\{x=(r,z)  \, : \, r > 0, \, \,\, z \in \R \}$ 
	where $ \nabla \cdot (uw) = \rd_{r} (u^rw) + \rd_{z}(u^zw)$. 
    
	Let us recall three conserved quantities for solutions to \eqref{eq axeuler}. To begin with, given a function $w$ on $\mathbb{H}$, we define its energy $E(w)$ by
	\begin{equation}\label{eq:F}
		\begin{split}
			E(w) 
			:=  \iint_{\bbH \times \bbH } \frac{\sqrt{rr'}}{2\pi} F\left( \frac{|x-x'|^2}{rr'} \right) w(x)w(x') \, dx\,dx'  , \quad F(s) := \int_0^{\pi} \frac{\cos a}{\left( s+2-2\cos a \right)^{\frac12}}\,da.
		\end{split}
	\end{equation}  
	Next, the total vorticity $M_{0}$ and the second momentum $M_{2}$ are defined by \begin{equation*}
		\begin{split}
			M_0(w) := \int_{\bbH} w(x) \,dx,  \qquad M_{2} (w) := \int_{\bbH} r^{2} w(x) \,dx, 
		\end{split}
	\end{equation*} respectively. Under mild regularity assumptions, solutions of \eqref{eq axeuler} conserve $E$, $M_0$, and $M_2$.
	
	\subsection{Main Results}
	We consider a broad class of initial data  characterized solely by their macroscopic invariants. Specifically, we shall refer to a \textit{vortex ring configuration} as any vorticity distribution satisfying the following constraints on total vorticity, second momentum, and energy:
	\begin{equation*}
		\begin{split}
			0 \le \frac{w(x)}{r} \lesssim  \frac{1}{\e^2}, \,   M_0(w)=\mu +o(1), \, M_2(w)=r_0^2 \mu+o(1), \, \mbox{ and }  \, E(w)=\frac{r_0 \mu^2}{2\pi}|\log \e|+O(1), 
		\end{split}
	\end{equation*} where $\e > 0$ is a small parameter.
	A prototypical example of initial data satisfying these conditions is the concentrated blob \begin{equation*}
		\begin{split}
			w_0(x)= \frac{\mu}{\e^2}f\left( \frac{x-x_0}{\e} \right), \qquad x_0=(r_0,z_0)
		\end{split}
	\end{equation*}
	where $f\in C_{c}^{\infty}(\bbH) $ is non-negative and satisfies $\int_{\bbH} f(y) \,dy=1$. The vortex filament conjecture predicts that, for such initial data, the corresponding solution $w(x,t)$ remains concentrated near a point of the form
	$$x(t)=\left( r_0, z_0+\left( \frac{\mu}{4\pi r_0}|\log \e|+O(1) \right)t \right).$$  
	Our first result rigorously establishes the vortex filament conjecture globally in time for vortex rings arising from generic initial vorticity. 
	\begin{theorem}\label{thm dynamical stable}
		Given $r_{0}, \mu, c_{1}, c_{2}, c_{3}, c_{4} > 0$, there exist $\e_0, C_{0} > 0$ depending on these parameters such that the following holds. For any $0< \e \le \e_0$, assume that $w_{0,\e} \in L^{\infty}(\bbH)$ and satisfies:
		\begin{itemize}
			\item[\textup{(i)}] $0 \le \frac{w_{0,\e}(x)}{r} < {c_{1}}{\e^{-2}}$, 
			\item[\textup{(ii)}] $|M_0(w_{0,\e})-\mu| <  {c_{2}}{|\log \e|^{-1}}$,
			\item[\textup{(iii)}] $|M_2(w_{0,\e})-r_0^2 \mu| <  {c_{3}}{|\log \e|^{-1}}$, 
			\item[\textup{(iv)}] $|E(w_{0,\e})-\frac{r_0 \mu^2}{2\pi}|\log \e|| < c_{4}$.
		\end{itemize} 
		Let $w_{\e}(\cdot,t)$ be the corresponding solution to the axisymmetric Euler equation \eqref{eq axeuler}. Then for any $t \in \bbR$, $w_{\e}(\cdot,t)$ remains concentrated near some point $x_{\e}^*(t)=(r_{\e}^*(t),z_{\e}^*(t)) \in \mathbb{H}$ in the sense that 
        \begin{equation*}
			\begin{split}
				\mbox{\textup{(a)}} \quad |r_{\e}^*(t)-r_0| \le C_0|\log \e|^{-1} \quad \mbox{and} \quad \int_{|x-x_{\e}^*(t)|\ge \e^{1/2}} (1+r^2)w(x,t) \,dx \le C_0|\log \e|^{-1}. 
			\end{split}
		\end{equation*}
		If we further assume that the initial vorticity is not too spread out in the $z$-axis, in the sense that 
		\begin{itemize}
			\item[\textup{(v)}] $A_{0,\e}:=\int_{\bbH} \langle z \rangle r^2w_{0,\e}(x) \,dx < +\infty$ holds,
		\end{itemize} then we have for all $t \in \bbR$ that 
		\begin{equation*}
			\begin{split}
				\mbox{\textup{(b)}} \quad \left|z_{\e}^*(t)-\frac{\mu }{4\pi r_0}|\log \e|t\right| \le C_{0}( 1+A_{0,\e}+|t|).
			\end{split}
		\end{equation*}
	\end{theorem}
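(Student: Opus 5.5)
Part (a) is a purely variational/static statement applied at each time, because conditions (i)--(iv) are preserved by the flow. Indeed $w/r$ is transported, so (i) holds for all $t$; $M_0$, $M_2$ and $E$ are conserved. So it suffices to prove a \emph{static} concentration lemma: any $w$ obeying (i)--(iv) has most of its mass $C_0|\log\e|^{-1}$-close (in the weighted sense) to a single point $x^*$ with $|r^*-r_0|\le C_0|\log\e|^{-1}$.

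\textbf{Static lemma.} I would use the asymptotics of the kernel: $F(s)\approx -\tfrac12\log s + O(1)$ as $s\to0$ and $F$ decays like $s^{-3/2}$ at infinity. This gives
\[
E(w)\le \frac{1}{4\pi}\iint \sqrt{rr'}\,\log\frac{\sqrt{rr'}}{|x-x'|}\,w\,w' + C .
\]
The first step is to show, using (i), the elementary bound $\int_{|x'-x|<\rho} w'\lesssim \rho^2/\e^2$ together with $M_0$ and $M_2$. From this one gets
\[
\iint \sqrt{rr'}\log^+\frac{1}{|x-x'|}\,ww' \le \tfrac12\,|\log\e|\Big(\int \sqrt r\, w\Big)^2 + O(1).
\]
Next, by Cauchy--Schwarz, $(\int\sqrt r\,w)^2 \le M_0\int r w \le M_0^{3/2}M_2^{1/2}$, which is at most $r_0\mu^2 + O(|\log\e|^{-1})$. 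Matching this against (iv), the slack is only $O(1)$ on a quantity of size $|\log\e|$, so each inequality is tight up to relative error $|\log\e|^{-1}$. Two consequences follow:
\begin{itemize}
\item[(1)] Near-equality in Cauchy--Schwarz forces $r$ to be nearly constant on the support, which gives $\int (r-r_0)^2 w \lesssim |\log\e|^{-1}$.
\item[(2)] The energy of pairs with $|x-x'|\ge \e^{1/2}$ is worth only about $\tfrac12|\log\e|$ per unit mass instead of $|\log\e|$. Hence the mass of such pairs is $\lesssim |\log\e|^{-1}$.
\end{itemize}
Choosing $x^*$ as a point maximizing the local mass in $\e^{1/2}$-balls, (2) gives the tail bound. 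The weight $(1+r^2)$ is handled by splitting large $r$ using $M_2$ and (1). I expect making the tightness quantitative, i.e. the sharp self-energy bound under (i), to be the main technical point; it is a Burton/Turkington-type rearrangement estimate.

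\textbf{Part (b).} I would track the $z$-location through the impulse-weighted centroid $Z(t)=\int z\,r^2 w\,dx / M_2$. Its time derivative is $\int u^z\,\partial_z(z)\,r^2 w + \int 2 r u^r z\, w$ divided by $M_2$. This is better rewritten via the standard identity $\frac{d}{dt}\int z r^2 w = \int(2 r u^z\cdot\ldots)$, which expresses it as a double integral of $w\otimes w$ against an explicit kernel $K(x,x')$. Near the diagonal this kernel equals $\frac{r}{4\pi}\log\frac{r}{|x-x'|}+O(1)$, coming from the Biot--Savart expansion of the self-induced ring velocity. Using (a), the contribution is $\frac{\mu}{4\pi r_0}|\log\e| + O(1)$. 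The error comes from pairs at distance $\ge\e^{1/2}$: they have mass $\lesssim|\log\e|^{-1}$ but a kernel of size up to $|\log\e|$ per unit mass, contributing $O(1)$. Errors from $|r-r_0|$ contribute $|\log\e|\cdot|\log\e|^{-1}=O(1)$ by (1).

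Integrating in time gives $|Z(t)-\frac{\mu}{4\pi r_0}|\log\e|t|\le |Z(0)|+C|t|$, and $|Z(0)|\lesssim A_{0,\e}$. Finally, $|Z-z^*|$ must be controlled. The far mass is small, but its $z$-moment could be large, so I would also bound $\int|z-z^*|r^2w$. This moment grows at most linearly: the velocity is $O(|\log\e|)$, and the far part outruns nothing since peripheral velocity is at most the core's. That gives the $A_{0,\e}+|t|$ bound and is the delicate step of (b).
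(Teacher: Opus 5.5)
\textbf{There is a genuine gap in part (b), at exactly the step you flag as delicate.}

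Your computation of $\dot Z$ is fine, and in fact simpler than you make it. After symmetrizing the $F_1$ contributions one has exactly
$\frac{d}{dt}\int z r^2 w\,dx=\frac1{2\pi}\iint F_2(s)\sqrt{rr'}\,ww'+O(1)=\frac12E+O(1)$.
So you should invoke conservation of $E$ here rather than (a): concentration at scale $\e^{1/2}$ alone cannot pin the near-diagonal logarithm down to $O(1)$.

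Everything then rests on $\int|z-z^*(t)|\,r^2w\,dx\lesssim 1+A_{0,\e}+t$, and you give no proof of it. Three problems:
\begin{itemize}
\item Taken literally, ``the velocity is $O(|\log\e|)$'' gives a growth rate $O(|\log\e|)$. That yields only $|Z-z^*|\lesssim A_{0,\e}+|\log\e|\,t$, which swamps the main term. You need an $\e$-independent rate in the frame of the core.
\item Near the core the velocity is pointwise of size $\e^{-1}$ (the swirl around the cross-section), so ``peripheral velocity is at most the core's'' is false pointwise. With the weight $|z-c|$, whose derivative jumps, the symmetrized swirl term carries the factor $r\,\mathrm{sgn}(z-c)-r'\,\mathrm{sgn}(z'-c)\approx 2r$ on pairs straddling $c$. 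This can produce terms of size $\e^{-1}$.
\item $z^*(t)$ is not a differentiable curve, so it cannot serve directly as the center of a differential inequality.
\end{itemize}

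The missing idea is the paper's moving barrier $A(t)=\int\langle z-V_\e t\rangle r^2w\,dx$, i.e.\ $\int\langle z\rangle r^2W$ with $W(r,z,t)=w(r,z+V_\e t,t)$.
\begin{itemize}
\item Its weight has bounded first and second derivatives. Hence every $F_1$ term can be antisymmetrized and bounded by $\iint\sqrt{rr'}(1+r')ww'\lesssim1$.
\item The two terms of size $|\log\e|$ cancel up to $O(1)$. These are the frame drift $-V_\e\int\frac{z}{\langle z\rangle}r^2W$ and the self-induced term $\frac1{2\pi}\iint F_2(s)\sqrt{rr'}\frac{z}{\langle z\rangle}WW'$.
\item The cancellation holds because the self-induced term equals $\frac{z_*}{\langle z_*\rangle}\left(\frac E2+O(1)\right)$, with $\frac E2=V_\e+O(1)$ after normalizing $\mu=r_0=1$. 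The error is $\lesssim|\log\e|\int(1+r^2)\left|\frac{z}{\langle z\rangle}-\frac{z_*}{\langle z_*\rangle}\right|W\,dx\lesssim1$. This uses the kernel bound $\iint F_2(s)\sqrt{rr'}\,g\,W'\lesssim|\log\e|\int(1+r^2)g$ and the $(1+r^2)$-weighted tail from (a).
\item Boundedness of $z/\langle z\rangle$ is what makes arbitrarily distant vorticity harmless.
\end{itemize}
Once $A(t)\le A(0)+Ct$, the bound on $z_*$ follows directly from $A(t)\ge\langle z_*\rangle(1-C|\log\e|^{-1})-C\e^{1/2}$, and $Z$ is not needed at all.

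\textbf{Part (a) is essentially correct and a legitimate variant of the paper's argument.} The paper uses $\sqrt{rr'}\le\frac{2+r^2+r'^2}{4}$ together with $\iint\frac{1+r^2}{2}ww'=\frac12(M_0^2+M_0M_2)$. This gives the $(1+r^2)$-weighted pair bound in one step, where you use the Cauchy--Schwarz deficit plus splitting in $r$. The paper also uses a Feng--Sverak-type interpolation in place of your local mass bound. Your bound as stated fails for large $r$, since (i) only gives $w\lesssim r\e^{-2}$.

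Two repairs are needed in (a):
\begin{itemize}
\item Your constants are off by a factor $4$. Since $E$ evaluates $F$ at $s^2=|x-x'|^2/(rr')$, one has $E\le\frac1{2\pi}\iint\sqrt{rr'}\log\frac{\sqrt{rr'}}{|x-x'|}ww'+C$. The self-energy bound is $|\log\e|(\int\sqrt r\,w)^2+O(1)$, not half of that. With your constants the upper bound would be $\frac{r_0\mu^2}{8\pi}|\log\e|$, which contradicts (iv); the tightness argument depends on the leading constants matching exactly.
\item Consequence (1) alone only gives $|r^*-r_0|\lesssim|\log\e|^{-1/2}$, which is Jerrard's rate. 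To get $|\log\e|^{-1}$ you must use $\int rw\,dx=r_0\mu+O(|\log\e|^{-1})$, which the same tightness provides. You then transfer it to $r^*$ through the weighted tail $\int_{|x-x^*|\ge\e^{1/2}}r\,w\,dx\lesssim|\log\e|^{-1}$, as the paper does with $\tilde r$.
\end{itemize}
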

	\begin{remark}
		We give a few remarks regarding the statements. 
		\begin{itemize}
			\item (\emph{Global wellposedness}) The assumptions for $w_{0,\eps}$ in Theorem \ref{thm dynamical stable} imply in particular that $w_{0,\eps}, \frac{w_{0,\eps}}{r} \in L^{1} \cap L^{\infty}(\bbR^{3})$. Under these assumptions, global existence and uniqueness of the corresponding solution of \eqref{eq axeuler} (and also of \eqref{eq euler 1}) satisfying $w_{\eps}, \frac{w_{\eps}}{r} \in L^{\infty}_{loc}( \bbR ; L^{1} \cap L^{\infty}(\bbR^{3} ))$ is well-known (\cite{UY1968,Danaxi}). This unique solution is \textit{Lagrangian} in the sense that there is a uniquely defined global in time ($t\in\bbR$) flow map $X(\cdot, t) : \bbH \to \bbH$ satisfying \begin{equation*}
				\begin{split}
					\partial_{t} X = u_{\eps}(X(\cdot,t),t), \quad X(x,t) = x, \quad (\frac{w_{\eps}}{r})(X(\cdot,t),t) = \frac{w_{0,\eps}}{r}, 
				\end{split}
			\end{equation*} where $u_{\eps}(\cdot,t)$ is the velocity of $w_{\eps}(\cdot,t)$ defined by \eqref{eq biot velocity}. One can verify that this unique solution conserves $M_{0}, M_{2}$, and $E$, and that the assumptions (i)--(iv) propagate for all times.
			\item (\emph{Scaling invariance}) If $w(x,t) = w(r,z,t)$ is a solution of \eqref{eq axeuler}, then for any $\gamma, \lambda > 0$, the function $w^{\lambda,\gamma}$ defined by 
			$$
			w^{\lambda,\gamma}(r,z,t):=\gamma w(\lambda r,\lambda z, \gamma t)
			$$
			solves \eqref{eq axeuler} as well. Using this scaling invariance, it suffices to prove the result in the case $\mu=r_0=1$, see Subsection \ref{subsec:prelim} below for details. 
			
			\item (\emph{Time reversal invariance}) Similarly, given a solution $w(r,z,t)$, one can check that $w(r,-z,-t)$ provides another solution of \eqref{eq axeuler}. Since all the assumptions (i)--(v) are invariant under $(r,z)\mapsto (r,-z)$, it suffices to prove the statements only for $t\ge0$. 
			
			\item (\emph{Translation invariance}) Except for the assumption on \( A_{0,\e} \), the other four hypotheses are invariant under translations in \( z \)-direction. Since \eqref{eq axeuler} has translation invariance in $z$, the above theorem implies the following: if there exists some \( z_{0,\epsilon} \in \mathbb{R} \) such that 
			\[ \qquad \quad \int_{\bbH} (1 + |z - z_{0,\epsilon}|^2)^{\frac{1}{2}} r^2 w_{0,\epsilon}(r, z) \, dx \lesssim 1 \quad\mbox{holds, then}\quad \left|z_{\e}^*(t)-z_{0,\e}-\frac{\mu }{4\pi r_0}|\log \e|t\right| \lesssim 1+|t|.
			\]
			\item (\emph{Short time dynamics}) 
			If we further assume that $A(0)-r_0^2\mu=o(1)$, then by \eqref{eq sec2 z*} we have 
			\[
			\qquad \left|z_\e^*(t)-\frac{\mu}{4\pi r_0}|\log \e|t\right|\le c_{t,\e,A(0)}, \quad \mbox{where} \quad c_{t,\e,A(0)}\to 0 \quad \mbox{as}\quad (t,\e,A(0)-r_0^2\mu)\to(0,0,0). 
			\] 
			\item (\emph{Sharpness of the axial tracking}) 
			The error bound of $O(t)$ in statement (b) is unavoidable under the assumptions (i)--(v). As demonstrated in the classical work of Fraenkel \cite{Fraenkel1970}, the $O(1)$ correction term in the Kelvin--Hicks formula \eqref{eq:Kelvin-speed} depends explicitly on the specific vorticity distribution (the profile) within the core, which cannot be determined by (i)--(v). 
		\end{itemize}
	\end{remark}
	 
	Our second result identifies a mechanism for filamentation, more specifically linear-in-time  creation of a long and thin tail along the $z$-axis. To quantify this, for a subset $S \subset \bbH$, we define \begin{equation*}
		\begin{split}
			\mathrm{diam}_{z}\left(S\right) = \sup\, \left\{ |z_{1} - z_{2}| \, : \, (r_1,z_1), (r_2,z_2) \in S \right\}. 
		\end{split}
	\end{equation*}
	
	\begin{theorem}\label{thm filamentation without perturbation}
		Let $w_{0,\eps}$ satisfy the assumptions \textup{(i)}--\textup{(v)} from Theorem \ref{thm dynamical stable}. Given $c_5>0$, there exist constants $\eps_{0},\tilde{C}_{d}>0$ such that the following holds for any $0 < \e \le \e_0$. Assuming further that there exists a subset $U \subset \operatorname{supp}(w_{0,\e})$ satisfying
		\begin{equation}\label{eq2co:off-axis-support}
			\bigl|U \cap \{ r \ge c_{5} \} \bigr| \ge \frac{\tilde{C}_{d}}{|\log \eps|^2} \quad \mbox{and} \quad \int_U w_{0,\e}(x)\,dx \ge \frac{\mu}{2},
		\end{equation}
		then the image of $U$ under the flow map $X(\cdot,t)$  satisfies, for all $t \ge 0$, \begin{equation*}
			\begin{split}
				\mathrm{diam}_{z}( X(U,t) ) \ge \frac{\mu}{8\pi r_{0}}|\log\eps|t - C_{\e}
			\end{split}
		\end{equation*} for some $C_{\e}$ depending on $w_{0, \e}$. In particular, the vorticity support radius grows linearly in time.
	\end{theorem}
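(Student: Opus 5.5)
The plan is to find two pieces of $X(U,t)$ that separate linearly in $z$. One piece is vorticity that stays in the concentrated core and travels at speed $\approx \frac{\mu}{4\pi r_0}|\log\e|$. The other piece is vorticity that sits at distance $\gtrsim |\log\e|^{-1}$ from the core, where the induced velocity is only about half as large. The proof combines Theorem \ref{thm dynamical stable}(b) with a lower bound on how much of $U$ lies "far" from the core, and it uses the Lagrangian structure from the remark above.

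\textbf{Step 1 (a point of $X(U,t)$ travels with the core).} Since $\int_U w_0\ge \mu/2$ and $w/r$ is transported, $\int_{X(U,t)} w(\cdot,t) \ge \mu/2 - o(1)$. Theorem \ref{thm dynamical stable}(a) says all but $C_0|\log\e|^{-1}$ of the vorticity lies within $\e^{1/2}$ of $x_\e^*(t)$. Hence $X(U,t)$ meets $B(x_\e^*(t),\e^{1/2})$ for every $t$. Theorem \ref{thm dynamical stable}(b) then gives a point of $X(U,t)$ with $z \ge \frac{\mu}{4\pi r_0}|\log\e| t - C_0(1+A_{0,\e}+t)$.

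\textbf{Step 2 (a trailing particle).} I must find a trajectory $X(x,t)$ with $x\in U$ and $z$-coordinate at most $\frac{\mu}{8\pi r_0}|\log\e|t + C_\e$. The velocity at a point at distance $d$ from the core, for $d \ge |\log\e|^{-1}$ (roughly) and $r\ge c_5$ away from the axis, splits into two parts. The local 2D-type part is of size $\mu/(2\pi d)$ and points around the core. The self-induced axial drift is $\frac{\mu}{4\pi r}\log(1/d)+O(1)$; for $d\gtrsim |\log \e|^{-1}$ this is $O(\log|\log\e|)$, far below $\frac{\mu}{8\pi r_0}|\log\e|$.

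\textbf{Step 3 (measure argument).} Use incompressibility (the measure $r\,dx$ is preserved) together with the hypothesis $|U\cap\{r\ge c_5\}|\ge \tilde C_d|\log\e|^{-2}$. A region of area $\ll \tilde C_d|\log\e|^{-2}$ cannot hold it, so at each time some part of $X(U,t)$ lies at distance $\gtrsim |\log\e|^{-1}$ from $x^*_\e(t)$. Choose $\tilde C_d$ large to beat the comparison constant.

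\textbf{Main obstacle.} The difficulty is upgrading this to a single trajectory. Points can be far from the core at some times and be sucked back near it at others, and near the core they move fast. To handle this I would replace pointwise trajectories by a monotone quantity, for example $\min_{y\in X(U,t)} z(y)$, or better $\inf$ over the far part. At any time the minimizing point of $\overline{X(U,t)}$ lies behind the core. Either it sits at distance $\ge \delta|\log\e|^{-1}$ from the core, where $u^z\le \frac{\mu}{8\pi r_0}|\log\e|$ by the velocity estimate. Or it lies within that distance, in which case the whole set $X(U,t)$ lies ahead of it. But the set cannot be entirely inside a $\delta|\log\e|^{-1}$-neighborhood plus the region ahead without violating Step 3. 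The reason is that the region behind $z^*_\e(t)-\delta|\log\e|^{-1}$ must contain mass of $X(U,t)$, because the far region lies partly behind the core by the rotational symmetry of the leading 2D flow.

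This last assertion is the weakest link. If it fails, I would fall back on $\operatorname{diam}_z \ge$ the spread between the core and the far set, which holds either way. A Dini-derivative bound $\frac{d^+}{dt}\min z \le \frac{\mu}{8\pi r_0}|\log\e|$ then gives the claim with $C_\e$ absorbing the initial extent.
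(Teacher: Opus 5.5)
Your Step 1 is correct. In fact $\int_{X(U,t)}w(\cdot,t)\,dx=\int_U w_{0,\e}\,dx$ holds exactly, since $w/r$ is transported and $r\,dx$ is preserved. This step is what supplies the leading point of $X(U,t)$.

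\textbf{The gap is the trailing point.} Every variant you propose, whether a single trajectory or the Dini derivative of $\min_{X(U,t)}z$, needs a pointwise bound $u^z\le\frac{\mu}{8\pi r_0}|\log\e|$ at a specific point. Nothing in Theorem \ref{thm dynamical stable} gives one:
\begin{itemize}
\item Part (a) leaves vorticity of mass $\sim|\log\e|^{-1}$ and density up to $c_1\e^{-2}$ at unknown locations. A priori such a piece induces velocities of order $\e^{-1}|\log\e|^{-1/2}$ near itself.
\item Even heuristically, the point-vortex term $\mu/(2\pi d)$ at $d=\delta|\log\e|^{-1}$ exceeds $\frac{\mu}{8\pi r_0}|\log\e|$ unless $\delta>4r_0$.
\item The dichotomy in your last paragraph does not close. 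Step 3 only says that some of $X(U,t)$ lies at distance $\gtrsim|\log\e|^{-1}$ from $x_\e^*(t)$, possibly ahead of the core or radially beside it. So whenever the minimiser sits near the core, $\min z$ advances at the core speed.
\item The assertion that the far part lies behind the core is essentially the conclusion of the theorem. Your fallback only yields a $z$-spread of size $O(|\log\e|^{-1})$, possibly zero, not one growing linearly in $t$.
\end{itemize}

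\textbf{The missing idea is to average rather than track points.} Work in the normalization $r_0=\mu=1$.
\begin{itemize}
\item Choose $V\subset U\cap\{r\ge c_5\}$ with $|V|\ge\tilde C_d/(2|\log\e|^2)$ and $w_{0,\e}\ge\eta>0$ on $V$.
\item Let $w_d$ solve $\rd_t w_d+\nb\cdot(uw_d)=0$ with $w_d(0)=\eta\mathbf 1_V$, transported by the full velocity $u$. Then $\|w_d\|_{L^1}$ and $\|w_d/r\|_{L^\infty}$ are conserved, their ratio is at most $2|\log\e|^2/(c_5\tilde C_d)$, and $\supp w_d(t)\subset X(\overline V,t)$.
\item Its barycenter $Z_d(t)=\|w_d\|_{L^1}^{-1}\int z\,w_d\,dx$ satisfies $\dot Z_d=\|w_d\|_{L^1}^{-1}\iint K^z(x,x')w_d(x,t)w(x',t)\,dx\,dx'$.
\item Integrate first in $x$. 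For $r\in[1/3,3]$, use $|K|\lesssim(1+r')/|x-x'|$ together with $\int\frac{f(x)}{|x-x'|}\,dx\lesssim(1+r')\|f\|_{L^1}^{1/2}\|f/r\|_{L^\infty}^{1/2}+\|f\|_{L^1}$. The regions $r\notin[1/3,3]$ are handled by the $(1+r^2)$-confinement in Theorem \ref{thm dynamical stable}(a).
\item This gives $|\dot Z_d|\le\ell_0(c_d^{-1/2}|\log\e|+c_d^{-1}+1)$ with $c_d=c_5\tilde C_d/2$.
\end{itemize}
This bound needs neither the position of the patch relative to the core nor any pointwise velocity estimate. A patch of that low density simply cannot move, on average, faster than about $c_d^{-1/2}|\log\e|$.

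Some point of $X(\overline V,t)\subset X(\overline U,t)$ then has $z\le Z_d(t)$. Take $\tilde C_d$ large enough that $\ell_0c_d^{-1/2}<\frac{1}{8\pi}$, and combine this with your Step 1 to obtain the claim.
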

	\begin{remark}
		The geometric condition \eqref{eq2co:off-axis-support} simply states that if the vortex is too ``fat,'' parts of it will be too far from the center to get enough induction. Since the core's speed $O(|\log \eps|)$ outpaces the weak induction at distance $O(|\log \eps|^{-1})$, these outer parts inevitably lag behind and peel off.
	\end{remark}

	As a direct consequence of filamentation, we obtain the growth of $\|\partial_{rr} w_\eps\|_{L^\infty}$:  as the vortex stretches in the $z$-axis, it becomes thinner in the radial direction by conservation of its support volume, causing growth of the derivatives of the vorticity. 
	\begin{corollary}\label{co filamentation without perturbation}
		Let $w_{ 0, \e} \in C_{c}^{\infty}(\mathbb{H})$ be supported away from $\{ r = 0 \}$ and satisfy assumptions \textup{(i)}--\textup{(v)}. Given $c_5>0$, there exist constants $\eps_{0}, {C}_{2,d}>0$ such that the following holds for any $0 < \e \le \e_0$. Suppose there exists a smooth and simple closed curve $\Gamma_1$ enclosing a region $D_1$ such that 
		$$
		\inf_{x\in \Gamma_1} |w_{0,\e}(x)| >0, \quad \int_{D_1} w_{0,\e}\,dx \ge \frac{\mu}2, \quad \mbox{and} \quad |\operatorname{supp}(w_{0,\e}) \cap D_1 \cap \{ r \ge c_{5} \}| \ge \frac{C_{2,d}}{|\log \e|^2}.
		$$ 
		Then the solution $w_{\e}(\cdot,t)$ satisfies the following lower bound for all $t \ge T$:
		\begin{equation*}
			\nrm{ \rd_{rr} w_{\e}(\cdot,t) }_{L^{\infty}(\bbH)} \ge \eta \sqrt{t}, \qquad \mbox{for some } \eta, T \mbox{ depending on } w_{0,\e}. 
		\end{equation*}  
	\end{corollary}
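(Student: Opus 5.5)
The plan is to apply Theorem \ref{thm filamentation without perturbation} with $U=D_1\cap\operatorname{supp}(w_{0,\e})$ (taking $C_{2,d}=\tilde C_d$). The hypotheses of the corollary supply both conditions in \eqref{eq2co:off-axis-support}: the measure bound on $U\cap\{r\ge c_5\}$ and $\int_U w_{0,\e}\ge \mu/2$. We conclude that $\mathrm{diam}_z(X(D_1,t))\ge \frac{\mu}{8\pi r_0}|\log\e|t - C_\e$. Since $X(\cdot,t)$ is a homeomorphism of $\bbH$, $X(D_1,t)$ is the region $D_1(t)$ enclosed by the Jordan curve $\Gamma_1(t):=X(\Gamma_1,t)$. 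The $z$-diameter of $D_1(t)$ equals that of its boundary, so $\mathrm{diam}_z(\Gamma_1(t))\gtrsim |\log\e|\, t$ for $t\ge T$.

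Next I use two conserved quantities along the flow. The quantity $w/r$ is transported, so on $\Gamma_1(t)$ we have $w(\cdot,t)/r\ge \delta:=\inf_{\Gamma_1} w_{0,\e}/r>0$. The flow preserves the measure $r\,dr\,dz$, so $\int_{D_1(t)} r\,dx=\int_{D_1} r\,dx=:V$ stays bounded. Moreover, $\operatorname{supp}w(\cdot,t)$ stays in a region $\{r\le R\}$, and away from $r=0$ in the sense that $r\ge r_{\min}(t)$ with at most polynomial/controlled degeneration. This follows because $\int r^2 w$ is conserved, and the radial velocity is bounded by $\nrm{u}_{L^\infty}\le C_\e$, using the standard bound $\nrm{u}_\infty\lesssim \nrm{w/r}_{L^1\cap L^\infty}^{\cdot}$ for axisymmetric data. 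So $X(\Gamma_1,t)$ lies in a set where $r\sim 1$ up to constants depending on $w_{0,\e}$. Actually the cleaner route is to work on the portion of the stretched curve lying in $\{r\ge c\}$; the diameter statement from the theorem concerns points of $U$ which carry vorticity, and the concentration from Theorem \ref{thm dynamical stable}(a) keeps most of the ring near $r\approx r_0$.

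The geometric step is this. A closed curve of $z$-extent $L\gtrsim t$ enclosing $r$-weighted area $V$ must contain, for most $z$-levels in its projection, a horizontal chord $\{z\}\times(r_1,r_2)\subset D_1(t)$ with endpoints on $\Gamma_1(t)$. For levels where the chord lies in $\{r\ge c\}$, Fubini gives $\int (r_2-r_1)\,dz\lesssim V/c$. Hence some level $z$ has a chord of length $h\lesssim V/(cL)\lesssim t^{-1}$. Along such a chord, $w(\cdot,t)$ is at least $c\delta$ at both endpoints. I want also a point where $w$ is small inside, or use that $w$ vanishes outside $D_1(t)$ near the endpoints. The cleanest approach is to choose the level so that the chord segment exits the support: the curve's extension means that crossing a thin section in the $r$-direction, one passes from $\Gamma_1(t)$, where $w\ge c\delta$, to a point with $w=0$ at distance $\lesssim h$. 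Such a point exists because the support volume $|\operatorname{supp} w|\cdot$ is conserved, so the support is also thin at most levels.

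Finally, a one-dimensional interpolation finishes the argument. If $g(r)=w(r,z,t)$ satisfies $g(a)\ge c\delta$, $g(b)=0$, $g'(b)=0$ (at a boundary point of the support) and $|b-a|\le h$, then Taylor's theorem gives $\nrm{g''}_\infty\ge 2c\delta/h^2\gtrsim t^2$. This is even stronger than needed. If one only has $g(a)\ge c\delta$ and $g(b)=0$ without the derivative vanishing, then one instead uses the interpolation $\nrm{g'}_\infty^2\lesssim \nrm{g}_\infty\nrm{g''}_\infty$ together with $\nrm{g'}\ge c\delta/h$, which yields $\nrm{g''}_\infty\gtrsim \delta^2/(h^2\nrm{w}_\infty)\gtrsim t^2$. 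Either way $\sqrt t$ follows comfortably; the weaker $\sqrt t$ also absorbs losses if $h$ is only $\lesssim t^{-1/4}$ after accounting for non-generic levels. The main obstacle I expect is the geometric step: rigorously extracting, from the diameter growth of $X(U,t)$ and conservation of weighted area, a level where a point with $w\ge c\delta$ on the transported curve lies within distance $o(1)$ in $r$ of a zero of $w$, while keeping $r$ bounded below there. I would handle this by applying Fubini to $D_1(t)\cap\{r\ge c_5/2\}$, where by the theorem's mechanism the lagging tail resides.
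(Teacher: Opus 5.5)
Your overall strategy (use the $z$-stretching, transport of $\xi=w/r$, conservation of $r\,dr\,dz$, and a one-dimensional calculus estimate) matches the paper's. However, the geometric step has two genuine gaps.

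\textbf{First gap: you bound the wrong quantity.} A horizontal chord of $D_1(t)$ with both endpoints on $\Gamma_1(t)$ carries no derivative information, because $w$ need not vanish anywhere near it. Since $w_{0,\e}$ is continuous with $\inf_{\Gamma_1}w_{0,\e}>0$, it is positive on a neighbourhood of $\Gamma_1$ on \emph{both} sides. So ``$w$ vanishes outside $D_1(t)$ near the endpoints'' is false, and $w_{0,\e}$ may even be positive on all of $\overline{D_1}$. What you need is a short horizontal segment from $\Gamma_1(t)$ to the zero set of $w(\cdot,t)$, whose length is controlled by a conserved quantity.

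The paper builds this in from the start. It picks a second smooth Jordan curve $\Gamma_2\subset\bbH\setminus\supp w_{0,\e}$, away from the axis, with $\overline{D_1}\subset D_2$. Consider each level $z$ in the $z$-projection of $D_{1,t}$; this is an interval of length $\gtrsim|\log\e|t$, as your use of Theorem~\ref{thm filamentation without perturbation} correctly gives. At that level, take the leftmost point $r_t(z)$ of $D_{1,t}$, which lies on $\Gamma_{1,t}$ where $\xi\ge\delta$. Take also the nearest point $r_t'(z)<r_t(z)$ of $\Gamma_{2,t}$, where $\xi=0$. These segments are disjoint and lie in $D_{2,t}$, whose measure with respect to $d\nu=r\,dr\,dz$ is conserved. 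Your remark that the support is ``thin at most levels'' could be turned into the same device (nearest zero to the left, together with $\nu(\{w_{0,\e}>0\})$). As written, though, it uses the Lebesgue measure of the support, which is not conserved.

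\textbf{Second gap: no lower bound on $r$.} Fubini against $\nu$ needs $r$ bounded below along these segments, and none of your justifications supply this:
\begin{itemize}
\item conservation of $M_2$ and a bound on $\|u\|_{L^\infty}$ give no time-uniform separation from the axis;
\item Theorem~\ref{thm dynamical stable}(a) controls only the mass away from $x^*(t)$, not the radial position of the lagging part;
\item Theorem~\ref{thm sec3 filamentation} controls only the barycenter $Z_d$.
\end{itemize}
Working in $\{r\ge c\}$ with $c$ fixed fails, because you cannot show that a set of levels of measure $\gtrsim t$ has its segments there.

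The paper resolves this with a time-dependent dichotomy at $\rho=(t|\log\e|)^{-1/2}$. If $\Gamma_{1,t}$ meets $\{r<\rho\}$, then, since $\xi\equiv0$ near the axis, the mean value theorem gives $\|\rd_r\xi\|_{L^\infty}\ge\delta/\rho$ immediately. Otherwise the right halves $J_t(z)$ of the segments lie in $\{r\ge\rho/2\}$, so $\int|J_t(z)|\,dz\le2\nu(D_2)/\rho$. At the same time $|J_t(z)|\ge\delta/(2\|\rd_r\xi\|_{L^\infty})$ on an interval of levels of length $\gtrsim|\log\e|t$. Balancing the two cases is exactly where the rate $\sqrt t$ comes from, and one concludes with $\|\rd_r\xi\|_{L^\infty}\le2\|\rd_{rr}w\|_{L^\infty}$.

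This also shows your $t^2$ claim rests on the unproved $r\sim1$. Your Taylor step itself is sound: a zero of the smooth nonnegative $w$ is a minimum, so $\rd_rw$ vanishes there. But combined with the dichotomy it only yields $\|\rd_{rr}w\|_{L^\infty}\gtrsim\min\{\rho^{-1},\rho^3(|\log\e|t)^2\}$ for each $\rho$, which again optimizes to $\sqrt t$. Your interpolation alternative would additionally need a bound on $\|w(\cdot,t)\|_{L^\infty}$, which is not conserved; only $\|w/r\|_{L^\infty}$ is.
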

	
	Finally, we conclude that vortex rings are generically unstable in $W^{2,\infty}$.
	\begin{theorem} \label{thm dynamical unstable}
		Let $w_{\star, 0, \e} \in C_{c}^{\infty}(\mathbb{H})$ satisfy assumptions \textup{(i)}--\textup{(v)} of Theorem \ref{thm dynamical stable}. Then there exists a smooth perturbation term $w_{\star, 0, p}\in C_{c}^{\infty}(\mathbb{H})$ such that the corresponding solution $w_{\e}(x,t)$ with initial datum $w_{0,\e}=w_{\star, 0, \e}+\delta w_{\star, 0, p}$ satisfies
		\begin{equation*}
			\nrm{ \rd_{rr} w_{\e}(\cdot,t) }_{L^{\infty}(\bbH)} \ge \eta \sqrt{t} \qquad\mbox{for all}\qquad t \ge T \quad \mbox{and} \quad 0<\delta \le \delta_0,
		\end{equation*}
		for some $\eta, \delta_0, T>0$ depending on $w_{\star, 0, \e}$.
	\end{theorem}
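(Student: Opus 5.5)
The plan is to deduce the theorem from Corollary~\ref{co filamentation without perturbation}. The perturbation is chosen so that one fixed curve and its interior witness the hypotheses of the corollary with constants that do not depend on $\delta$. The corollary's proof (filamentation via Theorem~\ref{thm filamentation without perturbation}, then conservation of area and a one-dimensional convexity argument across the thin tail) gives $\eta$ and $T$ depending only on three quantities: the value $a=\inf_{\Gamma_1}w_{0,\e}$, the area of the off-axis part of the support inside $D_1$, and the constants in (i)--(v). So it suffices to keep these three quantities uniform for $0<\delta\le\delta_0$. I will first re-examine the corollary's proof to confirm this dependence, namely $\eta\gtrsim a\,|D_1|^{-2}$ times powers of $|\log\e|$, and $T$ set by the constant $C_\e$ of Theorem~\ref{thm filamentation without perturbation}.

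\emph{Step 1 (choice of $\Gamma_1$ from the unperturbed datum).} Since $w_{\star,0,\e}\in C_c^\infty$ is nonnegative with mass $\mu+O(|\log\e|^{-1})$, I pick a regular value $a>0$ of $w_{\star,0,\e}$, depending only on $w_{\star,0,\e}$, such that $\{w_{\star,0,\e}>a\}$ has a component $D_1$ with $\int_{D_1}w_{\star,0,\e}\ge \tfrac{2\mu}{3}$. I set $\Gamma_1=\partial D_1$. If $\Gamma_1$ has several components, I replace $D_1$ by the region enclosed by the outer component and enlarge it slightly later. Then $\inf_{\Gamma_1}w_{\star,0,\e}=a$.

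\emph{Step 2 (the perturbation).} The perturbation is designed so that $w_{\star,0,p}$ vanishes near $\Gamma_1$. Then $\inf_{\Gamma_1}(w_{\star,0,\e}+\delta w_{\star,0,p})=a$ for every $\delta$, and the mass inside $D_1$ stays at least $\mu/2$. Its remaining role is to create off-axis support inside a closed curve on which $w_0\ge a$. Concretely, I choose $w_{\star,0,p}\ge0$ supported in a set $B\subset D_1\cap\{r\ge c_5\}$ of area at least $C_{2,d}|\log\e|^{-2}$, with $B$ placed where $w_{\star,0,\e}$ is small or vanishes; the positivity of $w_{\star,0,p}$ on $B$ is what puts $B$ into the support. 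If $D_1$ is too thin to contain such a $B$, I modify Step 1. I take $\Gamma_1$ to be the boundary of $D_1\cup N$, where $N$ is a thin tubular corridor of width about $|\log\e|^{-1}$ around a path to $B$. I add to $w_{\star,0,p}$ a smooth bump equal to a large multiple of $a/\delta_0$ on a neighborhood of $\partial N\setminus\partial D_1$. This makes $w_0\ge a$ along the whole new boundary for every $\delta\le\delta_0$, since $w_0\ge\delta\cdot 2a/\delta_0$ there only when $\delta\ge\delta_0/2$. Because this fails for smaller $\delta$, the corridor must instead be built from $w_{\star,0,\e}$'s own tail. The honest fix is to let $N$ lie in $\{w_{\star,0,\e}\ge a\}$ whenever possible, and otherwise to lower $a$ to $a'=\min_{\overline N}w_{\star,0,\e}>0$ by taking $N$ inside the interior of $\operatorname{supp}w_{\star,0,\e}$.

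\emph{Step 3 (verification and conclusion).} I check that $w_{0,\e}$ still satisfies (i)--(v) with constants independent of $\delta\le\delta_0$, which holds because $\delta\|w_{\star,0,p}\|_{L^1\cap L^\infty}$ is small. I check that $|\operatorname{supp}w_{0,\e}\cap D_1\cap\{r\ge c_5\}|\ge|B|$. I then apply Corollary~\ref{co filamentation without perturbation} with $a$ independent of $\delta$.

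The main obstacle is Step 2: obtaining a closed curve with $\inf w_0\ge a$, with $a$ independent of $\delta$, that encloses enough off-axis support. A perturbation of size $\delta$ cannot raise $w_0$ to a $\delta$-independent level, so the curve's lower bound must come from $w_{\star,0,\e}$ itself, while the perturbation only enlarges the support inside it. I will first try to take $B$ in the interior of $D_1$ in a region where $w_{\star,0,\e}$ is small but the support condition is still missing. This is where the filamentation hypothesis is genuinely not yet satisfied.
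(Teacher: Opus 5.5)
There is a genuine gap, and it comes from the constraint you impose in Step 2: a lower bound $a=\inf_{\Gamma_1}w_{0,\e}$ that does not depend on $\delta$. For generic concentrated data this cannot be achieved. Take the prototypical $w_{\star,0,\e}=\e^{-2}f((x-x_0)/\e)$ with $\supp f$ in the unit disc. Once $\delta\|w_{\star,0,p}\|_{L^\infty}\le a/2$, every curve on which $w_{0,\e}\ge a$ lies in $\{w_{\star,0,\e}\ge a/2\}\subset \overline{B_{\e}(x_0)}$. The region it encloses therefore has area at most $\pi\e^2$. Corollary~\ref{co filamentation without perturbation}, however, needs $|\supp(w_{0,\e})\cap D_1\cap\{r\ge c_5\}|\ge C_{2,d}|\log\e|^{-2}$. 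No choice of $B$, corridor $N$, or lowered level $a'$ can repair this, and your proposal indeed ends with the obstacle unresolved.

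The earlier steps are also unreliable. Step 1 fails if $\{w_{\star,0,\e}>0\}$ is, say, a union of many small disjoint bumps. Then no superlevel component carries $2\mu/3$ of the mass, and no curve on which $w_{\star,0,\e}>0$ encloses more than one bump. In Step 2, a set $B\subset D_1\subset\{w_{\star,0,\e}>a\}$ already lies in the support, so adding $w_{\star,0,p}$ there gains nothing.

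The missing idea is to let the perturbation supply the positivity on $\Gamma_1$, at level $\delta$. Corollary~\ref{co filamentation without perturbation} only gives $\eta,T$ depending on $w_{0,\e}$, hence on $\delta$, and the paper's argument accepts this. A version with $\eta$ and $T$ both uniform in $\delta$ cannot hold in general: letting $\delta\to0$ at fixed $t$ (continuous dependence of smooth solutions on the data) would transfer the growth to the unperturbed solution. That fails, e.g.\ for a smooth steadily translating ring.

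The paper's construction runs as follows.
\begin{enumerate}
\item Let $x^*$ be the concentration point of Theorem~\ref{thm sec2 main0}, with $|r^*-1|\lesssim|\log\e|^{-1}$. Let $C_{2,d}$ be the corollary's constant for $2c_1,\dots,2c_4$.
\item Take $\phi_1\ge0$ smooth, $\phi_1\equiv1$ on $B_{C_{2,d}/|\log\e|}(x^*)$ and supported in the doubled ball. The support of $w_{0,\e}$ then fills this off-axis ball, of area $\pi C_{2,d}^2|\log\e|^{-2}$.
\item Take a smooth simple closed curve $\Gamma_1$ enclosing $\supp(w_{\star,0,\e}+\phi_1)$ and staying away from $\{r=0\}$.
\item Take $\phi_2\ge0$ with $\phi_2\equiv1$ on $\Gamma_1$ and supported away from $\supp(w_{\star,0,\e}+\phi_1)$.
\end{enumerate}
With $w_{\star,0,p}=\phi_1+\phi_2$ one has $w_{0,\e}\equiv\delta$ on $\Gamma_1$. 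The enclosed region contains all of $w_{\star,0,\e}$ and the filled ball. For $\delta\le\delta_0$ small, (i)--(v) hold with the doubled constants, so the corollary applies directly.
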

	\begin{remark}
		Taking $\delta$ small enough, the perturbation $\delta w_{\star, 0, p} = w_{0,\e}-w_{\star, 0, \e}$ becomes arbitrarily small in any reasonable functional spaces.
	\end{remark}
	
	\smallskip 
	
	\noindent\textbf{Technical basis for instability.} The proof of the above universal instability criteria relies on the analysis of configurations admitting a ``core-distorted'' decomposition, serving as the constructive basis. Moreover, the decomposition developed here is robust and may be applicable to other transport equations exhibiting competition between coherent motion and shearing mechanisms.
	
	\begin{theorem}\label{thm filamentation}
		For $r_{0}, \mu, c_{1}, c_{2}, c_{3}, c_{4} > 0$, there exist $\eps_{0},C_{d}>0$ such that the following holds. If $w_{0,\eps}$ satisfies the assumptions \textup{(i)}--\textup{(v)} from Theorem \ref{thm dynamical stable} for some $0<\eps\le\eps_{0}$ and admits a decomposition $w_{0,\eps} = w_{m,0,\eps} + w_{d,0,\eps}$ satisfying \begin{equation}\label{eq:fila-assume}
			\begin{split}
				\quad w_{m,0,\eps}, w_{d,0,\eps} \ge 0 \quad\mbox{and} \quad 0 < \left\Vert \frac{w_{d,0,\eps}}{r} \right\Vert_{L^{\infty}} \le \frac{|\log \eps|^{2}}{C_{d}}\nrm{ w_{d,0,\eps} }_{L^{1}} ,  
			\end{split}
		\end{equation} then the solution $w_{\eps}(\cdot,t)$ corresponding to $w_{0,\eps}$ exhibits  linear-in-time filamentation in the $z$-direction
		\begin{equation*}
			\begin{split}
				\mathrm{diam}_{z}\left( \supp\left( w_{\eps}(\cdot,t) \right)  \right) \ge \frac{\mu}{8\pi r_{0}}|\log\eps|t - C_{\e}
			\end{split}
		\end{equation*} for some $C_{\e}>0$ depending only on $w_{0,\eps}$. 
	\end{theorem}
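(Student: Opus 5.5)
Throughout I normalize $\mu=r_0=1$ by scaling and work with $t\ge0$. The plan is to play two facts against each other. Theorem \ref{thm dynamical stable} shows that the bulk of the vorticity moves in $z$ at speed $\frac{1}{4\pi}|\log\eps|+O(1)$. On the other hand, the distorted part $w_{d,0,\eps}$ has mass $m_d:=\nrm{w_{d,0,\eps}}_{L^1}>0$ and, by \eqref{eq:fila-assume}, density $w_d/r\le |\log\eps|^2 m_d/C_d$. Hence its support has (weighted) area at least of order $C_d|\log\eps|^{-2}$, and the flow preserves this area. I will show that a definite portion of the Lagrangian particles carrying $w_d$ must lie at distance $\gtrsim C_d^{1/2}|\log\eps|^{-1}$ from the core at all times. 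At that distance the induced vertical velocity is only about half the core speed, so these particles fall behind linearly in time.

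\textbf{Step 1: a velocity bound away from the core.} For $x$ with $|x-x^*_\eps(t)|\ge \rho$, I want
\[
u^z(x,t)\le \tfrac{1}{4\pi}\log(1/\rho)+C,
\]
to hold up to errors controlled by the tail mass in (a). I will split $w=w\mathbf 1_{B(x^*,\eps^{1/2})}+w\mathbf 1_{B^c}$. The near part gives a velocity of size $\frac{1}{4\pi r}\log\frac{1}{|x-x^*|}+O(1)$, using the standard axisymmetric kernel expansion $\frac{1}{2\pi}\frac{(z-z')}{|x-x'|^2}$ plus the logarithmic ring term. The far part has mass $O(|\log\eps|^{-1})$ and bounded density, so its contribution is bounded by the usual estimate $\nrm{w}_{L^1}^{1/2}\nrm{w}_{L^\infty}^{1/2}$ with a log correction. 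The result is $O(|\log\eps|^{-1/2}|\log\eps|)$, which I still need to check is $\le \frac{1}{16\pi}|\log\eps|$. Taking $\rho=|\log\eps|^{-1}$ gives $u^z\le \frac{1}{4\pi}\log|\log\eps|+C$ at such points, and this is $\ll \frac{1}{8\pi}|\log\eps|$. Since $\rho$ is far larger than $\eps^{1/2}$, this is consistent with (a).

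\textbf{Step 2: area forces particles to stay outside the core.} The set $S_t=X(\supp w_{d,0,\eps},t)$ carries $w_d(\cdot,t)$, which is itself transported. By (a) and $r\approx1$ near the core, the $w_d$-mass inside $B(x^*,\rho)$ is at most $\frac{|\log\eps|^2 m_d}{C_d}\cdot 2\pi\rho^2$. Choosing $\rho=|\log\eps|^{-1}$ and $C_d$ large (say $C_d\ge 8\pi$), at least $m_d/2$ of the distorted mass lies outside $B(x^*(t),\rho)$ at every time. This gives a time-dependent family of particles that are slow at each instant.

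\textbf{Step 3: the main obstacle, converting instantaneous slowness into lag.} The difficulty is that particles could re-enter the core, so that the slow set changes identity in time. I will use a mass-weighted $z$-moment to avoid tracking individual trajectories. Let $Z_d(t)=\int z\, w_d(x,t)\,dx$, where $w_d$ is transported by the full velocity. Then $\frac{d}{dt}Z_d=\int u^z w_d$. Bounding $u^z\le C|\log\eps|$ on the core part and using Step 1 outside, I get
\[
\frac{d}{dt}Z_d\le m_d\bigl(\tfrac12\cdot\tfrac{1}{4\pi}+o(1)\bigr)|\log\eps|+C m_d .
\]
This needs the crude bound $\nrm{u}_{L^\infty}\le \frac{1}{4\pi}|\log\eps|+C$, which should follow from (i) and the kernel. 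Meanwhile, $z^*(t)\ge \frac{1}{4\pi}|\log\eps|t-C(1+A_0+t)$ by (b). Therefore the $w_d$-average satisfies
\[
\frac{Z_d(t)}{m_d}\le z^*(t)-\tfrac{1}{8\pi}|\log\eps|t-\text{lower-order}\cdot t-C_\eps .
\]
So some point of $\supp w(t)$ has $z\le \frac{Z_d}{m_d}$, while the core point $x^*(t)$ itself lies in the support by (a). This yields $\mathrm{diam}_z\ge \frac{1}{8\pi}|\log\eps|t-C_\eps$. The constant $\frac{1}{8\pi}$ requires the sharper split in which outside mass is at least $(1-\delta)m_d$ and the speed there is $o(|\log\eps|)$. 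I will obtain this by enlarging $C_d$, and absorb the $O(t)$ errors from (b) against $|\log\eps|t$ by taking $\eps_0$ small.
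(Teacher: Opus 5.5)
There is a genuine gap. Your architecture matches the paper's: transport $w_d$ by the full velocity, show its mean height $Z_d$ grows at a rate well below $V_\eps=\frac{1}{4\pi}|\log\eps|$, and compare with the core point from Theorem \ref{thm dynamical stable}. What fails is the way you bound $\int u^z w_d\,dx$. Every estimate you use is pointwise in $u$, and the required pointwise bounds are false.

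(1) $\nrm{u}_{L^\infty}$ is not $\frac{1}{4\pi}|\log\eps|+C$. A core of unit circulation and radius $\eps$ (e.g. the blob $\eps^{-2}f((x-x_0)/\eps)$) has swirl speed of order $\eps^{-1}$. So (i) only gives $\nrm{u}_{L^\infty}\lesssim\eps^{-1}$, and your treatment of the $w_d$-mass inside the core collapses.

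(2) Step 1 drops the leading point-vortex part of the kernel. At distance $d\gg\eps^{1/2}$ from $x^*$ the core induces $u^z\approx-\frac{r-r^*}{2\pi d^2}+\frac{1}{4\pi}\log\frac1d+O(1)$, which is about $\frac{1}{2\pi d}$ on the inner side of the ring. At your choice $\rho=|\log\eps|^{-1}$ this equals $\frac{1}{2\pi}|\log\eps|=2V_\eps$, so such particles need not lag at all. Indeed $|\log\eps|^{-1}$ is exactly the critical scale; you would need $\rho=\kappa|\log\eps|^{-1}$ with $1\ll\kappa^2\ll C_d$.

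(3) This one cannot be fixed by adjusting $\rho$. The tail $w\mathbf{1}_{\{|x-x^*|\ge\eps^{1/2}\}}$ has mass $O(|\log\eps|^{-1})$ but density up to $c_1\eps^{-2}$, not $O(|\log\eps|^2)$ as your $\nrm{w}_{L^1}^{1/2}\nrm{w}_{L^\infty}^{1/2}$ computation implicitly assumes. The tail may itself be a tiny concentrated blob inducing speeds of order $\eps^{-1}|\log\eps|^{-1/2}$ near it. So no pointwise bound on $u^z$ off the core "controlled by the tail mass" exists.

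The missing idea is to bound $\int u^z w_d$ by duality, exploiting that it is $w_d$, not $w$, whose density is controlled. Write $\frac{d}{dt}\int zw_d=\iint K^z(x,x')w_d(x,t)w(x',t)\,dx\,dx'$ and integrate in $x$ first.

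On $\{1/3\le r\le 3\}$ one has $|K|\lesssim(1+r')|x-x'|^{-1}$. Lemma \ref{le appendix A2} then gives $\int\frac{w_d(x)}{|x-x'|}dx\lesssim(1+r')\nrm{w_d}_{L^1}^{1/2}\nrm{w_d/r}_{L^\infty}^{1/2}+\nrm{w_d}_{L^1}$, and both norms are conserved. Integrating against $w(x')$ with $\int(1+r')^2w\,dx'\lesssim1$ bounds this part by $\nrm{w_d}_{L^1}^{1/2}\nrm{w_d/r}_{L^\infty}^{1/2}+\nrm{w_d}_{L^1}$. The regions $r<1/3$ and $r>3$ are handled by the far-field decay of $F_1,F_2$ and the tail bound (a), with an optimized cutoff radius.

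Dividing by $\nrm{w_d}_{L^1}$ and using \eqref{eq:fila-assume} gives $|\dot Z_d|\lesssim C_d^{-1/2}|\log\eps|+C_d^{-1}+1$ for the normalized moment, uniformly in time and regardless of how $w_d$ moves relative to the core. This makes your Step 2 unnecessary. The area lower bound enters only through the ratio $\nrm{w_d/r}_{L^\infty}/\nrm{w_d}_{L^1}$, since the average of $|x-x'|^{-1}$ over a set of area $a$ is $\lesssim a^{-1/2}$.

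With this estimate replacing Steps 1--3, your final comparison goes through. There is a point of $\supp w_d\subset\supp w$ at height $\le Z_d(t)$, and a point of $\supp w$ within $\eps^{1/2}$ of $x^*(t)$, where $z^*(t)\ge V_\eps t-C_0(1+A_{0,\eps}+t)$. Taking $C_d$ large and $\eps_0$ small yields the constant $\frac{1}{8\pi}$.
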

	
	\begin{remark} We give a few remarks regarding the condition \eqref{eq:fila-assume} in Theorem \ref{thm filamentation}. The filamentation mechanism is driven by vorticity particles whose distance from the vortex center satisfies
			\[
			d \gg |\log \eps|^{-1},
			\]
			for which the induced velocity cannot match the core translation speed of order $|\log \eps|$. Condition \eqref{eq:fila-assume} ensures the presence of a portion of vorticity located beyond the critical induction scale from the vortex center. Indeed, assuming for simplicity that $w_{0,\eps}$ is concentrated near $r=1$, the last inequality in \eqref{eq:fila-assume} yields
			\begin{equation*}
				\begin{split}
					\left\Vert  w_{d,0,\eps}  \right\Vert_{L^{\infty}} 
					\lesssim  C_d^{-1}|\log \eps|^{2} \nrm{ w_{d,0,\eps} }_{L^{1}}
					\lesssim  C_d^{-1}|\log \eps|^{2} 	\left\Vert  w_{d,0,\eps}  \right\Vert_{L^{\infty}}  
					|\supp( w_{d,0,\eps})| ,
				\end{split}
			\end{equation*}
			which implies that $|\supp(w_{d,0,\eps})| \gtrsim \frac{ C_d}{|\log \eps|^2}$, corresponding to the length scale $\frac{\sqrt{C_d}}{|\log \eps|}$; see Figure \ref{fig:data-fila}.
			
			While Theorem \ref{thm filamentation without perturbation} presents a more general criterion, its proof is established by constructing a suitable decomposition of the form \eqref{eq:fila-assume} and applying the estimates developed for Theorem \ref{thm filamentation}. 
	\end{remark}

	\begin{figure}
		\includegraphics{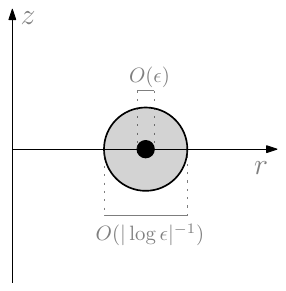}  
		\caption{Illustration for an example of $w_{0,\eps}$ satisfying the assumptions of Theorem \ref{thm filamentation}. The black and gray regions represent the support of $w_{0,m,\eps}$ and $w_{0,d,\eps}$, where the vorticity has size $O(\eps^{-2})$ and $O(1)$, respectively.}\label{fig:data-fila} 	 
	\end{figure}

	\subsection{Ideas of the proof} We now explain the key new ideas in current work. 
    
    \smallskip
    
    \noindent\textbf{Stability beyond finite-time.} The first pillar of our work is the rigorous justification of the Kelvin--Hicks formula \eqref{eq:Kelvin-speed} over infinite time horizons. Previous studies (e.g., \cite{BenedettoCagliotiMarchioro2000, Jer}) typically relied on tracking the movement of the barycenter\begin{equation*}\bar{z}(t) = \frac{ \int_{\bbH} z w(x,t) \, dx}{\int_{\bbH} w(x,t) \,dx}.\end{equation*}
    Its time evolution is governed by the velocity field through the Biot–Savart law. The singularity of the Biot–Savart kernel near the symmetry axis necessitates a radial truncation. However, this operation breaks the symmetry and inevitably generates error terms of the form $\int |z| w \, dx$. While the second moment $M_2 = \int r^2 w \, dx$ is conserved, the axisymmetric Euler equations provide no corresponding control in the axial direction. Consequently, these error terms become uncontrollable for large $|z|$, forcing previous works to impose additional cutoffs in $z$--direction, which restricts the analysis to short time scales $t \lesssim |\log \e|^{-1}$ (see \eqref{eq:Jerrard2} below).

    Beyond these technical obstacles, there is a more fundamental conceptual issue: our filamentation results actually show that after a long time, the barycenter $\bar{z}(t)$ may deviate significantly from the ring core, making it a poor proxy for the ring’s location. 

\smallskip
	
	\noindent\underline{1. The Lagrangian momentum barrier.} 
	To resolve the issues associated with the barycenter, we introduce a weighted ``moving barrier'' \begin{equation*}
		A(t) := \int_{\mathbb{H}} r^2 \langle z - V_{\epsilon}t \rangle w(x, t) \, dx \quad \text{where} \quad \langle z \rangle := \sqrt{1+z^2},
	\end{equation*}
	and develop a stability analysis around this quantity. This functional encodes the persistence of the core structure while remaining robust against the potential formation of filamentation.
    
    The specific choice of the weight $r^2$ is crucial. First, the factor $r^2$ naturally neutralizes the singularity of the Biot-Savart kernel near $r=0$, eliminating the need for any radial truncation. Second, the time derivative $\dot{A}(t)$ exhibits a special structural property: the leading-order term matches precisely with the energy $E$. Moreover, this choice triggers a delicate anti-symmetric structure where all error terms involving $\int |z| w\,dx$ cancel out exactly. Notably, replacing $r^2$ with any other weight would leave behind uncontrollable terms of order $\int |z| w \, dx$. As a result, we were able to prove the uniform-in-time growth bound $\dot{A}(t) \le C$ independently of $\epsilon$, which establishes a fundamental \emph{separation of scales}: while the ring moves at speed $O(|\log \epsilon|)$, any internal deformations or ``leakages'' in the translating frame remain $O(1)$. This allows us to justify \eqref{eq:Kelvin-speed} for all times.

    \smallskip
    
    \noindent\underline{2. Global confinement of the weighted vorticity.} While resolving axial issues, the $r^2$ weight in $A(t)$ introduces new challenges at spatial infinity. Jerrard \cite{Jer} (see also Proposition \ref{prop:Jerrard} below) proved global concentration of the vorticity $w$ around the ring core. However, this estimate does not preclude leakage of vorticity toward large radii. To overcome this, we establish a sharp global-in-time confinement for the weighted vorticity $(1+r^2)w$. By identifying a sharper coupling between the energy $E$, the total vorticity $M_0$, and the second momentum $M_2$, we prove that for all $t \in \bbR$, there exists a point $x^*(t) = (r^*(t), z^*(t))$ satisfying
    \begin{equation*}|r^*(t) - r_0| \lesssim |\log \epsilon|^{-1} \quad \text{and} \quad \int_{|x - x^*(t)| \ge \epsilon^{1/2}} (1+r^2) w(x,t) \, dx \lesssim |\log \epsilon|^{-1}.\end{equation*}
	
	\medskip 
	
	\noindent\textbf{Universal filamentation and geometric threshold}. The second pillar of our work is the discovery of a universal mechanism for filamentation that arises when a significant portion of the vorticity is distributed beyond a critical geometric scale. Unlike classical stability results that often focus on identifying ``unstable perturbations'' for particular steady states, our approach establishes that filamentation is a robust dynamical consequence for a broad class of vortex rings whose initial configurations exhibit a sufficiently large spatial spread.

	\smallskip 
	
	\noindent\underline{1. Balancing induction and translation.} 
	The physical intuition for this instability stems from a competition between global translation and local induction. A vortex ring of core radius $\epsilon$ propagates at a leading-order speed $V_\epsilon \sim |\log \epsilon|$, while the local velocity induced by the core decays as $d^{-1}$ at a distance $d$. In the moving frame of the ring, the structural coherence relies on the local induction being strong enough to synchronize the peripheral fluid with the high-speed core. When the initial vorticity is not sufficiently concentrated within the threshold $d \lesssim |\log \epsilon|^{-1}$, the local induction typically becomes insufficient to bridge this velocity gap. This mismatch suggests that, for generic configurations, the far-field components will fail to keep pace with the core, leading to an inevitable loss of coherence.

	\smallskip 
	
	\noindent\underline{2. Asymptotic stretching and the growth of derivatives.} 
	By utilizing a dual-velocity framework and the measure-preserving property of the Euler flow, we prove that this velocity mismatch leads to a linear-in-time stretching of the vortex support. For initial data where the vorticity is sufficiently spread and bounded away from the symmetry axis, the core effectively outruns its distant support, resulting in the linear growth of the axial diameter $\mathrm{diam}_z(\operatorname{supp} w)$. This geometric stretching is inherently accompanied by an intensification of vorticity gradients, leading to the growth of higher-order norms over time. 

\smallskip

    \noindent We conclude the introduction by reviewing the existing literature on vortex rings and clarifying the relation with previous approaches.
	 
	\subsection{Literature on vortex rings}\label{subsec:literature}
	
	The mathematical analysis of vortex rings in the incompressible Euler equations has developed along several distinct directions, driven by different dynamical questions. Broadly speaking, existing works may be grouped according to whether they address 
	\begin{itemize}
		\item The existence and structure of steady vortex rings,
		\item The interaction dynamics of multiple vortex rings on finite time scales, 
		\item The global-in-time evolution of a single vortex filament under general initial data.
	\end{itemize}
	Each of these questions has motivated specific analytical techniques and has led to results of a fundamentally different nature.
	
	\subsubsection{Construction of steady vortex rings} Classical contributions by Fraenkel and Berger \cite{FraenkelBerger1974} and subsequent developments by Benjamin \cite{Benjamin}, Turkington \cite{Turkington1983}, Burton \cite{Burton} and others employ variational methods to obtain vortex rings as extremizers of suitable energy functionals under {total vorticity and second momentum} constraints; see, for instance, \cite{FriedmanTurkington1981,Burton1988,AF88,BaBu01,Choi2024,CaoQinZhanZou2023}. These variational constructions yield global-in-time solutions, which are uniformly translating vortex rings. Such steady vortex rings can also arise from prescribed-profile constructions, see \cite{Fraenkel1970} in which the leading order of the vorticity is assumed to take the form of a rescaled profile,
	\begin{equation}\label{eq fraenkel profile}
		\omega_\epsilon(x)=\frac{1}{\epsilon^{2}}\zt \!\left(\frac{x-x_0}{\epsilon}\right),
	\end{equation}
	with $\zt$ satisfying specific structural assumptions; see also \cite{Adebiyi1981}. Such approaches provide a detailed description of the internal structure of vortex rings, but apply only to highly constrained classes of initial data and do not address the evolution of general vortex rings. 
	
	A closely related problem in two dimensions is to construct steadily traveling vortices in the half-plane $\bbR^2_+ = \left\{ (x_1,x_2) : x_{2} > 0 \right\}$ (\cite{Burton1996,BurtonNussenzveigHelena2013,Burton2021,AbeChoi2022,GallaySveark2024,Wang2024,CaoQinZhanZou2025,CJS,HuTo,ACJ}). A single point vortex (vorticity given as the Dirac delta distribution) in the half-plane corresponds to the straight-line vortex filament in three dimensions, and more regular traveling wave solutions can be obtained by desingularizing it, similarly to the axisymmetric case.

	\subsubsection{Interaction of multiple vortex rings and point vortices} 
	In the two-dimensional case, Marchioro and Pulvirenti \cite{MarchioroPulvirenti1983,MarchioroPulvirenti1994} developed a flexible framework that controls vorticities which are highly concentrated near a few points, without prescribing specific profiles; see also the subsequent improvements in \cite{Marchioro1998,ButtaMarchioro2018}. By propagating assumptions on the vorticity support, this approach derives that the effective dynamics on the time interval (depending on the concentration scale $\epsilon$) during which the support of the vorticity is strongly confined.
	Extending this framework to vortex ring solutions, the works \cite{ButtaMarchioro2022,ButtaMarchioro2020,CavallaroMarchioro2021,ButtaCavallaroMarchioro2025,Donati2025} control the growth of the vorticity support and validate reduced dynamical descriptions on time scales of order $t \lesssim |\log \epsilon|^{-1}$. There are very recent works that construct leapfrogging solutions \cite{HHM1,HHM2} and \cite{DavidaDelpinoMussoWei2024}, using KAM theory and gluing method, respectively.   
	
	\subsubsection{Global dynamics of a single vortex ring with general data}
	A third perspective is provided by energy-based concentration methods, based on the observation that energy is larger for more concentrated vortices. In two-dimensional domains, Turkington \cite{Turkington1987} showed that energy concentration implies global-in-time concentration of a single vortex for a broad class of initial data. Benedetto--Caglioti--Marchioro \cite{BenedettoCagliotiMarchioro2000} extended the result to the case of a single vortex ring. These results establish that vorticity remains highly concentrated in a global sense, without relying on prescribed profiles or strong assumptions on the vorticity support. However, while concentration \textit{at some point in the domain} is guaranteed for all times, precise tracking of that point is not available, even in the two-dimensional case. For a vortex ring, the quantitative description of the concentration point is typically available only when $t \lesssim |\log\e|^{-1}$, see Proposition \ref{prop:Jerrard} below. In the case of the helical Euler equations, Guo--Zhao \cite{GuoZhao2024} recently extended this time scale to $t \lesssim 1$. 
    
	\subsubsection{Filamentation and infinite time norm growth}
	Numerical simulations reveal that support confinement for concentrated vortices does not hold for all times, by creation of long filaments. This is the main obstruction for controlling the concentration points at infinite time. However, at the same time, it is not straightforward to rigorously prove the occurrence of such filamentation.
	
	In two-dimensional bounded domains, pioneering works by Denisov \cite{Den09}, Kiselev--Sverak \cite{KS}, Xu \cite{Xu}, and Zlatos \cite{Zlatos2015} obtained fast growth of the vorticity gradient using perturbations of a large-scale stable vortex with odd/even symmetries. In the half-plane case, Iftimie--Sideris--Gamblin \cite{ISG99,Iftimie1} obtained linear growth of the vorticity support diameter, using additional odd symmetry in $x_{1}$, see also \cite{ILN2003} for confinement results. With this odd symmetry, the sharp vorticity gradient growth problem was settled recently in Zlato\v{s} \cite{zlatos2025}. Without this odd symmetry, \cite{CJ-Lamb,JYZ} obtained growth of vorticity gradient and support diameter for certain perturbations of the Lamb dipole, which is a stable traveling wave in the half-plane (\cite{AbeChoi2022}). See \cite{ChoiJeong2022,DEJ,DE,Ki-sur2,MR25} for more results on filamentation in two dimensions. 
	
	There have been parallel results for the three dimensional case, mostly for axisymmetric solutions without swirl. Choi--Jeong \cite{ChoiJeong2023} obtained infinite time growth of the $W^{2,\infty}$-norm near Hill's vortex, which is an explicit traveling wave solution to \eqref{eq axeuler}. Furthermore, consideration of solutions to \eqref{eq axeuler} satisfying odd symmetry in $z$ led to several results on growth of vorticity in various norms (\cite{Elgindi21,EGM,CJ-axi,KJ22,CMZ25,GMT,EY25}): this models the case of two opposite-signed vortex rings (termed ``anti-parallel''), and growth occurs when the rings approach each other.

	\subsubsection{Contribution of the present work}
	
	To summarize, while substantial progress has been made on the existence of traveling wave configurations and short-time dynamics of vortex rings, their global-in-time behavior under general initial data remains poorly understood, filamentation being the main obstruction. In particular, it was unclear whether a concentrated vortex ring propagates along the $z-$axis with the expected velocity for all times.
	
	On the other hand, proof of filamentation behavior was mostly done under either an additional odd symmetry assumption on the vorticity or for specific perturbations of stable vortex rings. It was unclear whether filamentation could be obtained for vortex ring type data under assumptions only on macroscopic quantities on the data. 
	
	The present work successfully addresses both of these issues, by developing a framework which allows one to track the location of the vortex ring for all times based solely on a few macroscopic quantities. This framework is robust enough to capture the presence of ``slow part'' of the vortex ring solution, which necessarily filaments behind the vortex core.

	\subsection{Notation} We list the notation and conventions used in the paper. 
	\begin{itemize}
		\item The integrals are taken on $\mathbb{H} := \{x=(r,z)  \, : \, r > 0, \, \,\, z \in \R \}$, unless otherwise specified. 
		\item We use the Japanese bracket notation $\la x \ra =\sqrt{1+|x|^2}$ and write $s = |x-x'|/\sqrt{rr'}$.  \item The notation $a \lesssim b$ means that there exists a constant $C>0$ independent of $x,y,\e$ such that $a \le Cb$. The value of $C$ may change from a line to another, and even within a single line. Additionally, $a=O(b)$ denotes that $|a| \lesssim |b|$. We also use $a=o(b)$ or $a \ll b$ to indicate that $|a|$ is much smaller than $|b|$. 
		\item In what follows, for simplicity we drop the subscript $\eps$ from  $w_{0}$ and $w(\cdot,t)$.  
	\end{itemize}

	\subsection{Organization of the paper} In the rest of the paper, we prove Theorem \ref{thm dynamical stable} in Section \ref{sec:stable} and Theorems \ref{thm filamentation without perturbation}, \ref{thm dynamical unstable}, \ref{thm filamentation} and Corollary \ref{co filamentation without perturbation}  in Section \ref{sec:unstable}. Some open questions are listed in Section \ref{sec:open}. A few technical lemmas are proved in the appendix.

	\subsection*{Acknowledgments}  IJ is supported by the NRF grant from the Korea government (MSIT), No. 2022R1C1C1011051, RS-2024-00406821, and the Asian Young Scientist Fellowship. LZ is supported by National Natural Science Foundation of China under contract No. 12271497 and No. 12341102.
    DG would like to thank Xiaoyutao Luo for helpful comments and references.

	
	\section{Concentration}\label{sec:stable} 
	

	The main purpose of this section is to prove Theorem \ref{thm dynamical stable}.
	

	\subsection{Preliminaries}\label{subsec:prelim} 
	The velocity field $u = (u^{r}, u^{z})$ for axisymmetric Euler equation \eqref{eq axeuler} is defined by integral against $K = (K^{r}, K^{z})$: 
	\begin{equation}\label{eq biot velocity}
		u(x,t)=  \int_{\HH} K(x,x') w(x',t)\,dx', \qquad K(x,x') := \frac{1}{2\pi} F_1(s) \frac{(x-x')^{\perp}}{r\sqrt{rr'}}+\frac{1}{2\pi}F_2(s) \frac{1}{r}\sqrt{\frac{r'}{r}}\begin{pmatrix}  0 \\1 \end{pmatrix},
	\end{equation}
	where $x^{\perp} := (z,-r)$, $s := \sqrt{\frac{|x-x'|^2}{rr'}}$, and $F_{1}, F_{2}$ are given by
	\begin{equation} \label{eq:def-F_1-F_2} 
		F_1(s) := \int_0^{\pi} \frac{\cos a}{\left( s^2+2-2\cos a \right)^{\frac32}}\,da, \qquad 
		F_2(s) := \int_0^{\pi} \frac{1-\cos a}{\left( s^2+2-2\cos a \right)^{\frac32}}\,da. 
	\end{equation}
	We will use a few simple estimates for the kernels, see \cite[Section 2]{FS} for a proof.  
	\begin{lemma}\label{le se1 Biot savart estimates} We have the following estimates for the kernels $F$, $F_1$ and $F_2$ given in \eqref{eq:F} and \eqref{eq:def-F_1-F_2}: \begin{equation*}
			\begin{split}
				F(s) = \left\{
				\begin{aligned}
					\frac12 \log\frac{1}{s}+ \log 8 -2+O(s \log s), \quad  & s\le 1\\
					O(\frac{1}{s}), \quad  & s \ge 1
				\end{aligned}
				\right. 
			\end{split}
		\end{equation*}  and
		\begin{equation*}\label{eq:F1F2}
			F_{1}(s) = \left\{
			\begin{aligned}
				\frac{1}{s^2}-\frac38 \log\frac{1}{s}+ O(1) , \quad  & s\le 1\\
				O(\frac{1}{s^{3}}), \quad  & s \ge 1,
			\end{aligned}
			\right. \qquad  F_{2}(s) =  \left\{
			\begin{aligned}
				\frac12 \log\frac{1}{s}+ \frac{3\log 2-1}{2} +O(s^2), \quad  & s\le 1\\
				O(\frac{1}{s^{3}}), \quad  & s \ge 1.
			\end{aligned}
			\right.
		\end{equation*}  
	\end{lemma}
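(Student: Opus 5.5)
The plan is to obtain all three expansions from the elliptic-integral representations, treating the small-$s$ and large-$s$ regimes separately. We set $\beta = a/2$, so that $2-2\cos a = 4\sin^2\beta$, and write
\[
F(s)=2\int_0^{\pi/2}\frac{\cos 2\beta}{(s^2+4\sin^2\beta)^{1/2}}\,d\beta ,
\]
with analogous expressions for $F_1$ and $F_2$, where $F_1$ carries the power $3/2$ and $F_2$ carries $1-\cos a=2\sin^2\beta$ in the numerator. (For $F$ the argument is $s^2$, since \eqref{eq:F} is evaluated at $|x-x'|^2/(rr')$.)

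\emph{Large $s$.} For $s\ge 1$ we Taylor expand in $(2-2\cos a)/s^2$. For $F$, the zeroth-order term $s^{-1}\int_0^\pi\cos a\,da$ vanishes, and the remainder gives $O(s^{-3})$, which is in particular $O(1/s)$. For $F_1$ the leading term again integrates to zero, so we even get $O(s^{-5})$, and hence $O(s^{-3})$. For $F_2$ the numerator is bounded and the denominator is at least $s^3$, which directly gives $O(s^{-3})$.

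\emph{Small $s$, the function $F$.} The only singularity is at $\beta=0$. We write $\cos 2\beta = 1-2\sin^2\beta$; the second piece is regular and contributes $-2\int_0^{\pi/2}\sin\beta\,d\beta+O(s)=-2+O(s)$. For the first piece, $2\int_0^{\pi/2}(s^2+4\sin^2\beta)^{-1/2}d\beta$ is the complete elliptic integral $K(k)$ with $k'=s/\sqrt{s^2+4}$, up to the normalization factor $2/\sqrt{s^2+4}$. The classical expansion $K=\log(4/k')+O(k'^2\log k')$ then gives $\log(8/s)+O(s^2\log s)=\log\frac1s+\log 8+\dots$. Here we must be careful about whether the argument is $s$ or $s^2$: with argument $s^2$ we get $\log\frac1s$, while with the lemma's normalization (argument $s$, where $s$ plays the role of $s^2$) we get $\frac12\log\frac1s$. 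This is exactly the coefficient discrepancy, and we will track it so that it matches the lemma's convention; the stated constant is $\log 8-2$.

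\emph{Small $s$, the functions $F_1$ and $F_2$.} We use the same decomposition with the complete elliptic integral $E$ as well: $\int(s^2+4\sin^2\beta)^{-3/2}d\beta$ is expressed through $E(k)/k'^2$. Its expansion $E=1+\frac{k'^2}{2}(\log\frac4{k'}-\frac12)+\dots$ yields the $1/s^2$ leading term of $F_1$ together with the $-\frac38\log\frac1s$ correction after collecting the contribution of $-2\sin^2\beta$. For $F_2$, the numerator $2\sin^2\beta$ cancels the singularity down to a logarithm, and the $K$ and $E$ expansions give $\frac12\log\frac1s+\frac{3\log2-1}{2}+O(s^2)$.

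\emph{Expected obstacle.} The main difficulty is bookkeeping the exact constants and the precise error orders ($O(s\log s)$ versus $O(s^2)$). Rather than redoing that arithmetic, we will cite the standard asymptotics of $K$ and $E$ (as in \cite[Section 2]{FS}) for the constants and verify only the leading coefficients directly.
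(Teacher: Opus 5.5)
Your elliptic-integral route is the natural one; the paper gives no argument of its own, only the pointer to \cite[Section 2]{FS}. It does deliver the large-$s$ bounds and the singular terms and constants of $F$, $F_1$, $F_2$. The step that fails is the $F_2$ remainder, and it fails because the claimed $O(s^2)$ is false, not because of bookkeeping.

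Done exactly, your method gives $F_2(s)=\frac{K(k)-E(k)}{\sqrt{s^2+4}}$ with $k'^2=\frac{s^2}{s^2+4}$. Take $\Lambda=\log\frac{4}{k'}=\log\frac8s+O(s^2)$ and the classical expansions $K=\Lambda+\frac{k'^2}{4}(\Lambda-1)+O(k'^4\Lambda)$ and $E=1+\frac{k'^2}{2}(\Lambda-\frac12)+O(k'^4\Lambda)$. The $k'^2\Lambda$ terms do not cancel: $K-E=\Lambda-1-\frac{k'^2}{4}\Lambda+O(k'^4\Lambda)$. Combined with $(s^2+4)^{-1/2}=\frac12-\frac{s^2}{16}+O(s^4)$, this yields
\[
F_2(s)=\frac12\log\frac1s+\frac{3\log 2-1}{2}-\frac{3}{32}\,s^2\log\frac1s+O(s^2),\qquad 0<s\le 1 .
\]
So $O(s^2\log\frac1s)$ is the best possible remainder, and no citation can produce $O(s^2)$.

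The lemma's own (correct) $F_1$ expansion already rules out $O(s^2)$. First, $F_1(s)=-2F'(s^2)$. Second, integrating $\frac{d}{da}\bigl(\sin a\,(s^2+2-2\cos a)^{-1/2}\bigr)$ over $[0,\pi]$ gives $F_2(s)=\frac12F(s^2)-s^2F'(s^2)$. Integrating $F'(\sigma)=-\frac{1}{2\sigma}+\frac{3}{32}\log\frac1\sigma+O(1)$ then shows that $\frac12F(s^2)$ carries $+\frac{3}{32}s^2\log\frac1s$, while $-s^2F'(s^2)$ carries $-\frac{3}{16}s^2\log\frac1s$. You should prove the corrected statement with remainder $O(s^2\log\frac1s)$. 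This costs nothing downstream: the paper only uses $F_2$ to $O(1)$ accuracy (e.g.\ $|F_2(s)-\frac12F(s^2)|\lesssim 1$, $F_2(s)\le\frac12\log\frac1s+C$, and $|F_2(s)|\lesssim\sqrt{rr'}/|x-x'|$).

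There are also some smaller points.

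For $F$, your piece $-4\int_0^{\pi/2}\sin^2\beta\,(s^2+4\sin^2\beta)^{-1/2}\,d\beta$ is not regular in $s$. Your bound $-2+O(s)$ is, in the lemma's variable $\sigma=s^2$, only $O(\sigma^{1/2})$ instead of the stated $O(\sigma\log\frac1\sigma)$. Either use $\bigl|\frac{\sin^2\beta}{\sqrt{s^2+4\sin^2\beta}}-\frac{\sin\beta}{2}\bigr|\lesssim\frac{s^2\sin\beta}{(s+\sin\beta)^2}$, which integrates to $O(s^2\log\frac1s)$, or use the closed form $F(s^2)=\frac{(s^2+2)K(k)}{\sqrt{s^2+4}}-\sqrt{s^2+4}\,E(k)$.

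For $F_1$, make the bookkeeping explicit: $F_1=2J-F_2$ with $J=\int_0^{\pi/2}(s^2+4\sin^2\beta)^{-3/2}d\beta=\frac{E(k)}{s^2\sqrt{s^2+4}}=\frac{1}{2s^2}+\frac{1}{16}\log\frac1s+O(1)$. The log coefficient is therefore $\frac18-\frac12=-\frac38$.

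For $s\ge1$, the bounds on $F_1,F_2$ are immediate from $(s^2+2-2\cos a)^{3/2}\ge s^3$. For $F$ you need a first-order Taylor expansion with remainder rather than a series, since the ratio $(2-2\cos a)/\sigma$ reaches $4$ at $\sigma=1$. This gives $|F(\sigma)|\le\pi\sigma^{-3/2}$ for all $\sigma>0$.
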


	\medskip
	
	\noindent \textbf{Reduction by scaling}. Using scaling invariance, we show here that the assumptions (i)--(v) from Theorem \ref{thm dynamical stable} can be reduced to the following: 
	\begin{assumption}\label{as initial data0}
		The initial data $w_{0}$ satisfies, for some $c_{0}>0$ independent of $\eps$, \begin{equation*}
			\begin{split}
				0 \le \frac{w_{0}(x)}{r} \le \frac{c_{0}}{\e^2}, \quad M_{0}(w_{0}) = \int_{\bbH} w_{0}(x)\,dx=1, \quad M_2(w_{0})= \int_{\bbH} r^2 w_{0}(x)\,dx = 1
			\end{split}
		\end{equation*} and \begin{equation*}
			\begin{split}
				\left|E(w_{0}) - \frac{1}{2\pi} |\log\e|\right| \le c_{0}.
			\end{split}
		\end{equation*}

	\end{assumption}
	\begin{assumption}\label{as initial data}
		We assume that
		$$\int_{\bbH} \langle z \rangle r^2w_0(x)\,dx <+\infty.$$ 
	\end{assumption}

	\noindent To see this reduction, assume that $w_{0}$ satisfies (i)--(iv) from Theorem \ref{thm dynamical stable} with some constants $r_{0}, \mu, c_{1}, c_{2}, c_{3}, c_{4}$. We introduce $\tld{w}_{0}$ by \begin{equation*}
		\begin{split}
			\tld{w}_{0}(x) = \gmm w_{0}(\lmb x), \qquad\mbox{where}\qquad \gamma := \frac{ \int_{\bbH} r^{2} w_{0}(x) \, dx  }{ \left( \int_{\bbH} w_{0}(x) \, dx  \right)^{2} }, \qquad \lmb^{2} := \frac{ \int_{\bbH} r^{2} w_{0}(x) \, dx  }{  \int_{\bbH} w_{0}(x) \, dx  }. 
		\end{split}
	\end{equation*}  It can be easily checked that \begin{equation*}
		\begin{split}
			\gamma = \frac{r_{0}^{2}}{\mu} + O(\loge), \qquad \lambda = r_{0} + O(\loge)
		\end{split}
	\end{equation*} and that $\tld{w}_{0}$ satisfies Assumption \ref{as initial data0} with some $c_{0}$ depending on $r_{0}, \mu, c_{1}, c_{2}, c_{3}, c_{4}$. Similarly, if $w_{0}$ satisfies (v) from Theorem \ref{thm dynamical stable}, then $\tld{w}_{0}$ satisfies Assumption \ref{as initial data} trivially.
	
	If we denote by  $\tld{w}(x,t)$ and $w(x,t)$ the solutions corresponding to $\tld{w}_0$ and $w_0$, respectively, then we have $\tld{w}(x,t) = \gamma w(\lmb x, \gamma t)$. Therefore, if we prove (a) and (b) from Theorem \ref{thm dynamical stable} for $\tld{w}_{0}$ and $\tld{w}(x,t)$, it implies the statement for $w_{0}$ and $w(x,t)$. 
	
	
	\subsection{Concentration and confinement in $r$}\label{subsec:con-r} In this subsection, we prove part (a) of Theorem \ref{thm dynamical stable}, under Assumption \ref{as initial data0}. 

	\begin{theorem}\label{thm sec2 main0} For $w_0$ satisfying Assumption \ref{as initial data0}, let $w(x,t)$ be the corresponding solution to \eqref{eq axeuler} with initial data $w_0$. Then there exists $\e_0>0$ such that for any $0<\e \le \e_0$, $ 0\le t$ and $1 \le R \le \frac{1}{\e}$, 
		\begin{equation}\label{eq sec20 concentration99}
			\iint_{|x-x'|\ge R\e}(1+r^2)w(x,t)w(x',t)\,dx\,dx' \lesssim \frac{1}{\log R}.
		\end{equation}
		Moreover, there exists $x^*(t)=(r^*(t),z^*(t)) \in \mathbb{H}$ such that
		\begin{equation}\label{eq sec20 A20}
			|r^*(t)-1|\lesssim \frac{1}{|\log \e|} \qquad \mbox{and} \qquad \int_{|x-x^*(t)|\ge R\e } (1+r^2)w(x,t)\,dx \lesssim \frac{1}{\log R}.
		\end{equation} 	
	\end{theorem}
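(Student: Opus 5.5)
Since $M_0$, $M_2$ and $E$ are conserved and Assumption \ref{as initial data0} propagates for all times (with $w/r$ bounded by $c_0\eps^{-2}$ by transport), it suffices to prove \eqref{eq sec20 concentration99} and \eqref{eq sec20 A20} for a single fixed function $w\ge 0$ satisfying Assumption \ref{as initial data0}. So the theorem is a static rigidity statement: near-maximal energy forces concentration. The plan is an upper bound on $E(w)$ through Lemma \ref{le se1 Biot savart estimates}. We split the double integral according to $s=|x-x'|/\sqrt{rr'}\le 1$ or $s\ge1$. For $s\le1$ we use $F(s)\le \frac12\log\frac{\sqrt{rr'}}{|x-x'|}+C$, and for $s\ge 1$ we use $F=O(1/s)$. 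This gives
\begin{equation*}
2\pi E(w)\le \iint \sqrt{rr'}\Big(\tfrac12\log^+\tfrac{\sqrt{rr'}}{|x-x'|}+C\Big)w(x)w(x')\,dx\,dx'.
\end{equation*}

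We then split the region $|x-x'|\le R\eps$ from its complement $|x-x'|\ge R\eps$. On the far region the logarithm is at most $\frac12|\log\eps|-\frac12\log R+\frac12\log\sqrt{rr'}$. On the near region we bound the logarithmic integral using $w\le c_0 r\eps^{-2}$ by a rearrangement/bathtub argument: for fixed $x'$, the quantity $\int_{|x-x'|\le R\eps}\log\frac{1}{|x-x'|}w(x)\,dx$ is maximized when the mass sits in a disc of radius $\sim\eps\sqrt{\text{mass}}$. This yields $\frac12|\log\eps|\iint\sqrt{rr'}ww'+O(1)$, where we must track the weight $r$ inside $w\le c_0 r\eps^{-2}$ and the factor $\sqrt{rr'}$. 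The key algebraic step is Cauchy--Schwarz:
\begin{equation*}
\iint\sqrt{rr'}\,w w'=\Big(\int r^{1/2}w\Big)^2\le M_0M_2^{1/2}\cdot M_0^{1/2}\cdots\le 1,
\end{equation*}
with equality only when $w$ is concentrated at $r=1$. More precisely, $\big(\int\sqrt r\,w\big)^2\le M_0\int r\,w\le M_0^{3/2}M_2^{1/2}=1$, and the deficit is $1-\big(\int\sqrt r\,w\big)^2\gtrsim\int(r-1)^2w$. Comparing with $E\ge\frac1{2\pi}|\log\eps|-c_0$ then gives two bounds:
\begin{equation*}
\tfrac12\log R\iint_{|x-x'|\ge R\eps}\sqrt{rr'}\,ww'\lesssim 1,\qquad |\log\eps|\int(r-1)^2w\lesssim 1 .
\end{equation*}

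The main obstacle is replacing $\sqrt{rr'}$ by $(1+r^2)$ in the far-region bound, and controlling the regions of small $r$ and large $r$. For large $r$ we use the second bound above, which gives $\int_{|r-1|\ge 1/2}(1+r^2)w\lesssim|\log\eps|^{-1}$, since $(1+r^2)\lesssim(r-1)^2$ there. This is consistent with $M_2=1$ only in combination with $M_0=1$; note that $\int(r^2-1)w=0$, so $\int(r-1)^2w=2\int(1-r)w$, which is exactly the Cauchy--Schwarz deficit. The log terms $\log\sqrt{rr'}$ that are positive for large $r$ must also be absorbed by this same $(r-1)^2$ control. On $\{|r-1|<1/2\}$ the weights $\sqrt{rr'}$ and $1+r^2$ are comparable. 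Combining these gives \eqref{eq sec20 concentration99}, since $\frac1{\log R}\ge\frac1{|\log\eps|}$ for $R\le1/\eps$.

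For \eqref{eq sec20 A20}, we first apply \eqref{eq sec20 concentration99} with a fixed large $R_0$. Since $\iint_{|x-x'|\ge R_0\eps}ww'\le\frac12$ and $M_0=1$, there is a point $x^*$ with $\int_{|x-x^*|<R_0\eps}w\ge\frac12$. Then, for $|x-x^*|\ge 2R\eps$ (with $R\ge R_0$), each such $x$ is at distance at least $R\eps$ from every point of that ball. Hence
\begin{equation*}
\tfrac12\int_{|x-x^*|\ge2R\eps}(1+r^2)w\le\iint_{|x-x'|\ge R\eps}(1+r^2)ww'\lesssim\frac1{\log R}.
\end{equation*}
Finally, since a mass of at least $\frac12$ sits within $R_0\eps$ of $x^*$ and $\int(r-1)^2w\lesssim|\log\eps|^{-1}$, we get $(r^*-1)^2\lesssim|\log\eps|^{-1}$. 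This initial rate is too weak, so we sharpen it: take $R=\eps^{-1/2}$, so that almost all mass (deficit at most $|\log\eps|^{-1}$) lies within $\eps^{1/2}$ of $x^*$. Then $M_0=M_2=1$ forces the $r^2$ average to be $1$, giving $|r^{*2}-1|\lesssim|\log\eps|^{-1}+\eps^{1/2}$, hence $|r^*-1|\lesssim|\log\eps|^{-1}$.
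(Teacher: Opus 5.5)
Your argument is correct, up to a normalization slip noted below, but it takes a different route from the paper.

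\textbf{Comparison with the paper.} The paper immediately dominates $\sqrt{rr'}$ pointwise by the symmetric weight $\frac{2+r^2+(r')^2}{4}$. Its integral against $w(x)w(x')$ equals $\frac{M_0^2+M_0M_2}{2}=1$ exactly. So the energy lower bound becomes at once $\iint\bigl(\log\eps+\log\frac{1}{|x-x'|}\mathbf{1}_{\{|x-x'|\le1\}}\bigr)(1+r^2)w(x)w(x')\,dx\,dx'\ge -C$, and the $(1+r^2)$-weighted pair concentration comes out in one step.

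The price is that the near-field term then carries the quadratic weight $1+r^2$. Since $w\lesssim r\eps^{-2}$, a crude $L^\infty$ bound would cost a third moment. This is why the paper needs the $x$-uniform bound on $\int\log\frac{\eps}{|x-x'|}\mathbf{1}_{\{|x-x'|\le R\eps\}}w(x')\,dx'$ given by the Feng--Sverak inequality (Lemma \ref{le feng-sverak}).

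You instead keep $\sqrt{rr'}$ and use two facts:
\begin{itemize}
\item the factorization $\iint\sqrt{rr'}\,w\,w'=(\int\sqrt r\,w)^2$;
\item the Cauchy--Schwarz deficit $1-(\int\sqrt r\,w)^2\ge\frac12\int(r-1)^2w$.
\end{itemize}
This gives $|\log\eps|\int(r-1)^2w\lesssim1$, before and independently of locating $x^*$, and that bound is what upgrades your far-field estimate from $\sqrt{rr'}$ to $1+r^2$.

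Your route also has three smaller advantages.
\begin{itemize}
\item Since $\sqrt{rr'}\approx r'$ grows only linearly in the near field, you need neither Feng--Sverak nor a bathtub argument. Write $\log\frac1{|x-x'|}=|\log\eps|+\log\frac{\eps}{|x-x'|}$ and use $w\le c_0(r'+\eps)\eps^{-2}$ on $\{|x-x'|<\eps\}$; the contribution is $\lesssim\int(1+(r')^2)w'\lesssim1$.
\item Choosing $x^*$ from a fixed $R_0$ plus the triangle inequality gives one point valid for all $R$ at once, whereas the paper's averaging step produces an $x^*$ that may depend on $R$.
\item Deducing $(r^*)^2=1+O(|\log\eps|^{-1})$ directly from $M_0=M_2=1$ with $R=\eps^{-1/2}$ is shorter than the paper's variance argument with $\tilde r$.
\end{itemize}

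\textbf{Points to fix.}
\begin{itemize}
\item The energy kernel is $F$ evaluated at $s^2=|x-x'|^2/(rr')$. So for $s\le1$ the bound is $F(s^2)\le\log\frac{\sqrt{rr'}}{|x-x'|}+C$, with coefficient $1$, not $\frac12$. With your factor, the upper bound $2\pi E\le\frac12|\log\eps|+O(1)$ would contradict $E\ge\frac1{2\pi}|\log\eps|-c_0$. The far-field gain is correspondingly $\log R$, not $\frac12\log R$.
\item In the far region, use $\log^+\sqrt{rr'}$ rather than $\log\sqrt{rr'}$. These terms are simply $O(1)$, via $\sqrt{rr'}\log^+\sqrt{rr'}\lesssim1+r^2+(r')^2$ and conservation of $M_0,M_2$. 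Invoking the $(r-1)^2$ control for them would be circular, since that control is derived from the inequality containing them.
\item Comparability of $\sqrt{rr'}$ and $1+r^2$ needs both $r$ and $r'$ near $1$. The case $|r'-1|\ge\frac12$ is covered by $\iint_{\{|r'-1|\ge1/2\}}(1+r^2)w\,w'\le 8\int(r'-1)^2w'\lesssim|\log\eps|^{-1}$.
\end{itemize}
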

	This is a strengthening of the following result by Benedetto--Caglioti--Marchioro and Jerrard. This improvement is essential for tracking the $z$-coordinate of the vortex ring. \begin{proposition}[\cite{BenedettoCagliotiMarchioro2000,Jer}]\label{prop:Jerrard}
		For $w_0$ satisfying Assumption \ref{as initial data0},  there exists $x^*(t)=(r^*(t),z^*(t))$ for any $t \ge 0$ such that the corresponding solution $w(x,t)$ to \eqref{eq axeuler} satisfies 
		\begin{equation*}\label{eq:Jerrard1}
			|r^*(t)-1| \lesssim \frac{1}{\sqrt{|\log \e|}} \qquad \mbox{and} \qquad \int_{|x-x^*(t)| \ge \e^{\frac{1}{2}}} w(x,t) \,dx \lesssim \frac{1}{|\log \e|}.
		\end{equation*}
		Moreover, for any $T>0$, there exists $\e_0=\e_0(T)>0$ such that for $0<\e \le \e_0$, 
		\begin{equation}\label{eq:Jerrard2}
			|z^*(t)-V_{\e}t| \ll 1 \qquad\mbox{for any}\qquad 0 \le t \le \frac{T}{|\log \e|}. 
		\end{equation}  
	\end{proposition}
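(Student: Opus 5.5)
The plan is an energy-concentration argument in the spirit of Turkington, carried out at a fixed time $t$. All hypotheses of Assumption \ref{as initial data0} propagate in time: $w/r$ is transported, so the bound $0\le w/r\le c_0\e^{-2}$ persists, and $M_0$, $M_2$, $E$ are conserved. It therefore suffices to prove the concentration statement for a single function $w$ satisfying Assumption \ref{as initial data0}. The $z$-tracking \eqref{eq:Jerrard2} then follows from a separate short-time argument.

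\textbf{Step 1 (energy upper bound).} By Lemma \ref{le se1 Biot savart estimates}, $F(s)=\tfrac12\log\tfrac1s+O(1)$ for $s\le1$ and $F(s)=O(1/s)$ for $s\ge 1$. Hence
\[
E(w)\le \frac{1}{4\pi}\iint \sqrt{rr'}\,\log^{+}\!\frac{\sqrt{rr'}}{|x-x'|}\,w(x)w(x')\,dx\,dx' + C .
\]
I would split this integral into pairs with $|x-x'|<\e^{1/2}$ and pairs with $|x-x'|\ge\e^{1/2}$. For fixed $x$, a bathtub (rearrangement) argument uses $w\le c_0 r\e^{-2}$ and the $\sqrt{rr'}$ weight to show that the inner integral $\int_{|x-x'|<\e^{1/2}}\log\frac{1}{|x-x'|}w(x')\,dx'$ is at most $|\log\e|\,m(x)+C$, where $m(x)$ is the local mass. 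On the far pairs the logarithm is at most $\tfrac12|\log\e|+C$. Summing the two pieces gives
\[
E(w)\le \frac{|\log\e|}{4\pi}\Big[2\Big(\int\sqrt r\,w\Big)^2-\iint_{|x-x'|\ge\e^{1/2}}\sqrt{rr'}\,ww\Big]+C .
\]
Cauchy--Schwarz with $M_0=M_2=1$ gives $\big(\int\sqrt r\,w\big)^2\le \int w\int r w\le 1$. Comparing with the lower bound $E\ge\frac{1}{2\pi}|\log\e|-c_0$ then yields two deficits of order $|\log\e|^{-1}$:
\[
1-\Big(\int\sqrt r\,w\Big)^2\lesssim \frac1{|\log\e|},\qquad \iint_{|x-x'|\ge\e^{1/2}}\sqrt{rr'}\,ww\lesssim\frac1{|\log\e|}.
\]

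\textbf{Step 2 (from deficits to concentration).} The first deficit says that $\sqrt r$ has variance $\lesssim|\log\e|^{-1}$ under the probability measure $w\,dx$. By Chebyshev, the mass outside $|r-1|\lesssim|\log\e|^{-1/2}$ is small. On the region where $r\approx1$, the weight $\sqrt{rr'}$ is harmless, so the far-pair bound controls $\iint_{|x-x'|\ge\e^{1/2}}ww$ there.

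To pass from this double-integral bound to a single integral, I would choose $x^*$ maximizing the mass in a ball $B(x^*,\e^{1/2})$. If that maximal mass were bounded away from $1$, the double integral over far pairs would be of order one, contradicting Step 1. Hence $\int_{|x-x^*|\ge\e^{1/2}}w\lesssim|\log\e|^{-1}$, and since the ball carries almost all the mass, $|r^*-1|\lesssim|\log\e|^{-1/2}$.

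\textbf{Step 3 (short-time tracking).} For \eqref{eq:Jerrard2}, I would test the equation against $\varphi(x)=z\,\chi(x)$, with $\chi$ a cutoff equal to $1$ on a unit neighborhood of the initial core. This gives
\[
\frac{d}{dt}\int\varphi w=\iint K(x,x')\cdot\nabla\varphi(x)\,w(x)w(x')\,dx\,dx' .
\]
Where $\nabla\varphi=\mathbf e_z$, I symmetrize in $(x,x')$. The singular $F_1$ part of $K$ is antisymmetric up to $O(1)$ errors, so it contributes $O(1)$. The $F_2$ part, restricted to pairs inside the core, contributes $\frac{1}{4\pi}\frac{1}{r}\log\frac{1}{s}\approx \frac{|\log\e|}{4\pi}=V_\e$, with error $o(|\log\e|)$ by Steps 1--2. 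Consequently the localized barycenter moves at speed $V_\e+o(|\log\e|)$. Over times $t\le T/|\log\e|$ this gives the displacement $V_\e t+o(1)$, and concentration transfers the barycenter estimate to $z^*$.

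\textbf{Main obstacle.} The hardest step is controlling the far-away vorticity in Step 3, where $\chi$ is non-constant. There the error terms are of order $(\text{speed})\times(\text{mass})$, and $u$ can be as large as $|\log\e|$ on a set of mass $\lesssim|\log\e|^{-1}$. I would use that the time window has length $\lesssim|\log\e|^{-1}$, so the core cannot travel far enough to reach the edge of the cutoff region, and that the far mass contributes only $O(1)\cdot|\log\e|^{-1}\cdot t\ll1$. This is also exactly why the argument is restricted to $t\lesssim|\log\e|^{-1}$.
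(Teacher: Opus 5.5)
\paragraph{Steps 1--2: correct, by a different route.} Steps 1--2 are a correct proof of the concentration part, and they differ from the argument the paper uses for its stronger Theorem~\ref{thm sec2 main0}. Both arguments start from $E\le E_1+C$. Both need a uniform bound on the truncated logarithmic potential: the paper gets it from the Feng--\v{S}ver\'ak-type Lemma~\ref{le feng-sverak} applied to the rescaled vorticity, and you get it from the bathtub principle. The real difference is how the weight $\sqrt{rr'}$ is handled.

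The paper uses $\sqrt{rr'}\le\frac{2+r^2+(r')^2}{4}$, so the weight becomes $\frac{1+r^2}{2}$ and the double integral factors against $M_0=M_2=1$. This gives $(1+r^2)$-weighted concentration at every scale $R\e$ with rate $1/\log R$, and a variance argument for $r$ then yields $|r^*-1|\lesssim|\log\e|^{-1}$.

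You keep $\sqrt{rr'}$ and read off two deficits from $(\int\sqrt r\,w)^2\le 1$. The small variance of $\sqrt r$ localizes the mass in $r$ directly. What it buys is only unweighted concentration and $|r^*-1|\lesssim|\log\e|^{-1/2}$. That is exactly the proposition, but not the weighted confinement the paper needs later for the error terms in $\dot A(t)$.

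For \eqref{eq:Jerrard2} the paper gives no proof and simply cites it. Within the paper it is recovered from \eqref{eq sec2 z*}, with no cutoff in $z$, once $A(0)-1=o(1)$. Your localized-barycenter argument is the classical one and is adequate for $t\lesssim|\log\e|^{-1}$.

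Two small slips. First, $F$ in $E$ is evaluated at $s^2$ and $F(s^2)=\log\frac1s+O(1)$ for $s\le 1$, so the prefactor in your first display should be $\frac{1}{2\pi}$, as your second display already assumes. Second, the rate in Step 2 comes from the linear inequality $\iint_{|x-x'|\ge\e^{1/2}}w\,w\ge 1-\sup_x\int_{|x-x'|<\e^{1/2}}w(x')\,dx'$, combined with Chebyshev for $\sqrt r$ on an $O(1)$ window. A contradiction argument would only give $o(1)$.

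\paragraph{Step 3: the far-field step fails as written.} The failure is in how you treat the region where $\chi$ is non-constant. The premise that $|u|\lesssim|\log\e|$ off the core is false for the $F_1$ (rotational) part of the velocity. The core alone induces velocity of order $1/d$ at distance $d$. Off-core vorticity of mass $\lesssim|\log\e|^{-1}$ with $w/r\lesssim\e^{-2}$ can induce on itself velocities as large as $\e^{-1}|\log\e|^{-1/2}$. The speed-times-mass bound then gives roughly $\e^{-1}|\log\e|^{-3/2}$ per unit time, which does not become small after integrating over $t\le T/|\log\e|$.

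The fix is to symmetrize the $F_1$ contribution over all of $\bbH\times\bbH$, not only where $\nb\varphi=\mathbf e_z$. With $\psi:=\nb\varphi/r$, the symmetrized integrand is $\frac{F_1(s)}{4\pi\sqrt{rr'}}(x-x')^{\perp}\cdot(\psi(x)-\psi(x'))$. Since $\chi$ is smooth and supported away from the axis, this is bounded by $\nrm{\nb\psi}_{L^\infty}\sqrt{rr'}$, so the whole $F_1$ part is $O_T(1)$. Only the log-singular $F_2$ part obeys a pointwise bound $\lesssim|\log\e|$ on $\supp\chi$, by the same bathtub estimate, and there your speed-times-mass reasoning is fine.

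Finally, the core travels a distance of about $T/(4\pi)$, which is not small. So you need $\chi\equiv 1$ on a neighborhood of size $\gtrsim 1+T$, not a unit one. You also need a continuity (bootstrap) argument keeping $x^*(t)$ inside $\{\chi=1\}$ before you transfer the barycenter bound to $z^*$.
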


	We need a few preliminary lemmas, which are proved in the Appendix. 
	\begin{lemma}[\cite{Jer}]\label{le app B1}
		For $w_0$ satisfying Assumption \ref{as initial data0}, let $w(t)$ be the corresponding solution to   \eqref{eq axeuler}. Then there exists $C>0$ independent of $\e$ such that
		$$
		E_1(w):=\frac{1}{2\pi} \iint_{ \bbH \times \bbH } \log\frac{1}{|x-x|'}\mathbf{1}_{\{|x-x'|\le 1\}} \sqrt{rr'}w(x)w(x')\,dx\,dx' \ge E(w(t)) - C. 
		$$ 
	\end{lemma}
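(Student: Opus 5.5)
The idea is to compare the two kernels pointwise, $\frac{\sqrt{rr'}}{2\pi}F\bigl(\frac{|x-x'|^2}{rr'}\bigr)$ against $\frac{1}{2\pi}\sqrt{rr'}\log\frac{1}{|x-x'|}\mathbf 1_{\{|x-x'|\le1\}}$. The difference should be bounded by $C(1+r^2+r'^2)$, whose integral against $w\otimes w$ is controlled by the conserved quantities $M_0=M_2=1$. Two facts from the setting are used throughout. First, $w(\cdot,t)\ge0$ for all $t$, since $w/r$ is transported. Second, $M_0(w(t))=M_2(w(t))=1$. So it suffices to show, for all $x,x'\in\bbH$,
\begin{equation*}
\sqrt{rr'}\,F\Bigl(\tfrac{|x-x'|^2}{rr'}\Bigr)\le \sqrt{rr'}\log\tfrac{1}{|x-x'|}\mathbf 1_{\{|x-x'|\le1\}} + C(1+r^2+r'^2).
\end{equation*}
Integrating this against $w(x)w(x')\ge0$ then gives $E(w(t))\le E_1(w(t))+C(M_0^2+2M_0M_2)$, which is the claim.

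Write $\sigma=|x-x'|/\sqrt{rr'}$, so that the argument of $F$ is $\sigma^2$. By Lemma \ref{le se1 Biot savart estimates}, $F(\sigma^2)=\log\frac1\sigma+O(1)$ when $\sigma\le1$, and $F(\sigma^2)=O(\sigma^{-2})$ when $\sigma\ge1$. In particular $F$ is bounded above for $\sigma\ge1$. The proof splits into four cases according to the sizes of $\sigma$ and $|x-x'|$.

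\emph{Case $\sigma\le1$, $|x-x'|\le1$.} We have $\log\frac1\sigma=\log\frac{1}{|x-x'|}+\frac12\log(rr')\le \log\frac1{|x-x'|}+\frac12\log^+(rr')$. Multiplying by $\sqrt{rr'}$, the extra terms are $\sqrt{rr'}\bigl(\tfrac12\log^+(rr')+C\bigr)\lesssim 1+rr'\lesssim 1+r^2+r'^2$.

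\emph{Case $\sigma\le1$, $|x-x'|\ge1$.} Here $\log\frac1\sigma\le\frac12\log^+(rr')$, and the same bound applies.

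\emph{Case $\sigma\ge1$, $|x-x'|\le 1$.} Since $F$ is bounded, $\sqrt{rr'}F\lesssim\sqrt{rr'}\lesssim 1+r^2+r'^2$. Moreover the $E_1$ kernel is nonnegative on this set, so nothing is lost.

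\emph{Case $\sigma\ge1$, $|x-x'|\ge1$.} Here $\sqrt{rr'}F(\sigma^2)\lesssim \sqrt{rr'}\,rr'/|x-x'|^2\le \sqrt{rr'}\,\sigma^{-2}\cdot rr'/|x-x'|^2\cdot|x-x'|^2/(rr')$. Simplifying directly, $\sqrt{rr'}\,\sigma^{-2}\le\sqrt{rr'}\lesssim 1+r^2+r'^2$.

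Note that near the axis ($r$ or $r'$ small) the term $\log(rr')$ is negative, which only helps; this is why $\log^+$ was used above. Since all kernel bounds are pointwise and $w\ge0$, there is no need to localize $x^*$ or to use the energy bound in Assumption \ref{as initial data0}: $C$ depends only on the constants in Lemma \ref{le se1 Biot savart estimates}.

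The only step requiring care is the bookkeeping of the $\frac12\log(rr')$ mismatch between $\log\frac1\sigma$ and $\log\frac1{|x-x'|}$. This mismatch is exactly why the $r^2$-moment $M_2$, and not just $M_0$, is needed, because $\sqrt{rr'}\log^+(rr')$ grows faster than $\sqrt{rr'}$.
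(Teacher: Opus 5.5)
Your proposal is correct and is essentially the paper's proof. The paper also bounds the energy kernel pointwise by splitting at $s=1$: on $\{s\le 1\}$ it writes $\log\frac{1}{s}=\log\frac{1}{|x-x'|}+\log\sqrt{rr'}$, absorbs $\sqrt{rr'}\log^+\sqrt{rr'}$ and the bounded remainder $C\sqrt{rr'}$ into $C(1+r^2+(r')^2)$, and then uses conservation of $M_0$ and $M_2$. Your four-case split is just more explicit bookkeeping of those same two regions; the garbled chain of inequalities in your last case reduces to $F(\sigma^2)\lesssim 1$ for $\sigma\ge 1$, as in your third case, so it does no harm.
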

	
	\begin{lemma}[Feng--Sverak type inequality, cf. \cite{FS}]\label{le feng-sverak}
		There holds
		$$
		\|Gf\|_{L^{\infty}} \lesssim \|r^2f\|_{L^1}^{\frac14}\|f\|_{L^1}^{\frac14}\|\frac{f}{r} \|_{L^{\infty}}^{\frac12} 
		$$
		where the operator $G$ is defined by 
		$$Gf(x) := \int_{\bbH} \log \frac{1}{|x-x'|}\mathbf{1}_{\{|x-x'|\le 1\}}f(x')\,dx'.$$
	\end{lemma}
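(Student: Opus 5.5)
We will prove the bound by combining an elementary ``bathtub'' estimate for the truncated logarithmic kernel with a dyadic decomposition in the radial variable $r'$. No properties of $f$ are used except $|f(x')|\le \|f/r\|_{L^\infty}\, r'$ and the two $L^1$ bounds. By replacing $f$ with $|f|$ we may assume $f\ge0$. Write
\[
L:=\Big\|\frac fr\Big\|_{L^\infty},\qquad A:=\|f\|_{L^1},\qquad B:=\|r^2f\|_{L^1}.
\]

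\emph{Step 1 (bathtub bound).} We first show that for any $0\le h\in L^1\cap L^\infty(\bbH)$,
\[
\|Gh\|_{L^\infty}\lesssim \big(\|h\|_{L^\infty}\|h\|_{L^1}\big)^{1/2}.
\]
Fix $x$ and set $M=\|h\|_{L^\infty}$, $m=\|h\|_{L^1}$. The kernel $\log\frac1{|x-x'|}\mathbf 1_{\{|x-x'|\le1\}}$ is nonnegative and radially decreasing about $x$. Hence, among all $h$ with $0\le h\le M$ and $\int h=m$, the integral $\int \log\frac1{|x-x'|}\mathbf 1_{\{|x-x'|\le 1\}}h(x')\,dx'$ is maximized by $h=M\mathbf 1_{B_\rho(x)}$ with $\pi\rho^2M=m$ (truncated at $\rho=1$).
\begin{itemize}
\item If $m\ge\pi M$, the integral is at most $M\int_{B_1}\log\frac1{|y|}\,dy\lesssim M\le (Mm)^{1/2}$.
\item If $m<\pi M$, an explicit computation gives a bound $\lesssim m\,(1+\log(M/m))$. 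Since $s(1+\log(1/s))\lesssim s^{1/2}$ for $s\in(0,\pi)$, applied with $s=m/M$, this is $\lesssim (Mm)^{1/2}$.
\end{itemize}

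\emph{Step 2 (dyadic decomposition in $r'$).} For $j\in\mathbb Z$ put $h_j:=f\,\mathbf 1_{\{2^j\le r'<2^{j+1}\}}$, so that $f=\sum_j h_j$. Since $G$ has a nonnegative kernel, $Gf\le\sum_j Gh_j$ pointwise. On the support of $h_j$ we have
\[
\|h_j\|_{L^\infty}\le L\,2^{j+1},\qquad \|h_j\|_{L^1}\le \min\big(A,\;4^{-j}B\big),
\]
using $r'^2\ge 4^j$ for the second bound. Step 1 then gives
\[
\|Gf\|_{L^\infty}\lesssim L^{1/2}\sum_{j\in\mathbb Z}2^{j/2}\min\big(A^{1/2},\,B^{1/2}2^{-j}\big).
\]

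\emph{Step 3 (summing the series).} The summand behaves like $A^{1/2}2^{j/2}$ for $2^j\le (B/A)^{1/2}$ and like $B^{1/2}2^{-j/2}$ for $2^j\ge (B/A)^{1/2}$. Both pieces are geometric series dominated by their terms near $2^j\approx (B/A)^{1/2}$, where the summand equals $A^{1/4}B^{1/4}$. Hence the sum is $\lesssim (AB)^{1/4}$, which yields the claimed inequality $\|Gf\|_{L^\infty}\lesssim B^{1/4}A^{1/4}L^{1/2}$.

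The only delicate point is the logarithmic loss in Step 1. This is exactly why we need the square-root form $(Mm)^{1/2}$, rather than the naive $m\log(M/m)$, to make the dyadic sum converge geometrically on both sides.
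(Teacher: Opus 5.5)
Your proof is correct, but it takes a different route from the paper's.

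\textbf{The paper's route.} The paper never decomposes $f$. It bounds the truncated logarithm by $|x-x'|^{-1/100}$ and writes $|f| = |r^2 f|^{1/4}\,|f/r|^{1/2}\,|f|^{1/4}$ pointwise. It then pulls out $\|f/r\|_{L^\infty}^{1/2}$ and applies H\"older with exponents $(4,4,2)$ to the two $L^1$ factors and the kernel, which is square-integrable on the unit ball. In fact $\log\frac{1}{|y|}\mathbf{1}_{\{|y|\le 1\}}$ is itself in $L^2$, so even the power bound is unnecessary.

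\textbf{Your route.} You first prove the unweighted $L^1$--$L^\infty$ potential bound. You then recover the weights by dyadic summation in $r'$, with the balance at $r'\sim (B/A)^{1/2}$. The paper's factorization is the H\"older-level shortcut of exactly this balancing.

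\textbf{Two small remarks.} First, your Step 1 also follows in one line from Cauchy--Schwarz, since the truncated logarithmic kernel lies in $L^2$. So it is less delicate than you suggest. Second, in the second case of Step 1 the exact value is $\frac{m}{2}\bigl(1+\log\frac{\pi M}{m}\bigr)$. This is not $\lesssim m\bigl(1+\log\frac{M}{m}\bigr)$ when $m/M\in(e,\pi)$, because the latter expression is negative there. The conclusion $\lesssim (Mm)^{1/2}$ is unaffected.

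\textbf{What each route buys.} Your argument only uses the $L^1$--$L^\infty$ bound for the kernel. It therefore extends to kernels that are not locally square-integrable, such as $|x-x'|^{-1}$ in Lemma~\ref{le appendix A2}, where the $L^2$-based H\"older argument does not apply directly. The paper's route buys a two-line proof.
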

	
	\begin{proof}[Proof of Theorem \ref{thm sec2 main0}] 	Since all the requirements from Assumption \ref{as initial data0} are conserved in time, it suffices to prove \eqref{eq sec20 concentration99} and \eqref{eq sec20 A20} at $t = 0$. We denote $w_{0}$ by $w$ below.
		Define \begin{equation*}
			\begin{split}
				E_2(w) &:=\frac{1}{2\pi} \iint  \log\frac{1}{|x-x'|}\mathbf{1}_{\{|x-x'|\le 1\}} \frac{2+r^2+(r')^2}{4}w(x)w(x')\,dx\,dx' \\
				& = \frac{1}{2\pi} \iint  \log\frac{1}{|x-x'|}\mathbf{1}_{\{|x-x'|\le 1\}} \frac{1+r^2}{2}w(x)w(x')\,dx\,dx'.
			\end{split}
		\end{equation*} 
		Using $\frac{2+r^2+(r')^2}{4} \ge \sqrt{rr'}$, Lemma \ref{le app B1}, and Assumption \ref{as initial data0}, we obtain
		$$
		E_2(w) \ge E_1(w)\ge E(w) - C \ge \frac{|\log \e|}{2\pi} \iint \frac{1+r^2}{2}w(x)w(x')\,dx\,dx'-C,
		$$
		which yields
		\begin{equation*}
			\iint \left( \log \e +\log\frac{1}{|x-x'|}\mathbf{1}_{\{|x-x'|\le 1\}} \right) (1+r^2) w(x)\,w(x')\, dx \ge -C.
		\end{equation*}
		Therefore, for any $1 \le R \le \frac{1}{\e}$, there holds
		\begin{equation*}
			\begin{aligned}
				&\iint_{|x-x'|\le R\e}\log\frac{\e}{|x-x'|}\mathbf{1}_{\{|x-x'|\le 1\}}(1+r^2)w(x)w(x')\,dx\,dx' + C\\ 
				& \qquad \ge
				|\log\e| \iint_{|x-x'|\ge 1} (1+r^2)w(x)w(x')\,dx\,dx'  + \log R\iint_{1\ge |x-x'|\ge R\e} (1+r^2)w(x)w(x')\,dx\,dx' 
			\end{aligned}
		\end{equation*}
		which gives
		\begin{equation}\begin{aligned} \label{eq:inter}
				\log R \iint_{|x-x'|\ge R\e} (1+r^2)w(x)w(x')\,dx\,dx' 
				\le  \iint\log\frac{\e}{|x-x'|}\mathbf{1}_{\{|x-x'|\le R\e\}}(1+r^2)w(x)w(x')\,dx\,dx'+C.\end{aligned}\end{equation}
		Set $w_{\e}(x)=\e^2 w(\e x)$, note that $\|w_{\e}\|_{L^1} \lesssim1$, $\|r^2w_{\e}\|_{L^1}\lesssim \e^{-2}$, and $\left\|\frac{w_{\e}}{r} \right\|_{L^{\infty}} \lesssim \e$, using the Feng--Sverak type inequality (Lemma \ref{le feng-sverak}), we get for any $x$ a uniform bound 
		\begin{equation*}
			\begin{aligned}
				&\int\log\frac{\e}{|x-x'|}\mathbf{1}_{\{|x-x'|\le R\e\}}w(x')\,dx'=\int\log\frac{1}{|\frac{x}{\e}-x'|}\mathbf{1}_{\{|x-\e x'|\le R\e\}}w_{\e}(x')\,dx'\\
				&\qquad \le \int\log\frac{1}{|\frac{x}{\e}-x'|}\mathbf{1}_{\{|\frac{x}{\e}- x'|\le 1\}}w_{\e}(x')\,dx'
				=G(w_{\e})(\frac{x}{\e}) \lesssim 1.
			\end{aligned}
		\end{equation*}
		Thus, from \eqref{eq:inter} we obtain \eqref{eq sec20 concentration99}: \begin{equation*}
			\begin{split}
				\iint_{|x-x'|\ge R\e} (1+r^2)w(x)w(x')\,dx\,dx' \le \frac{C}{\log R}.
			\end{split}
		\end{equation*} 
		Since $\int w(x)\,dx =1+O(\e)\ge \frac12$, there exists $x^* = (r^*,z^*) \in \mathbb{H}$ such that
		\begin{equation}\label{eq sec20 con near r} 
			\int_{|x-x^*|\ge R\e } (1+r^2)w(x)\,dx \le \frac{2C}{\log R}, 
		\end{equation} which is the second estimate in \eqref{eq sec20 A20}. It remains to prove the first estimate from \eqref{eq sec20 A20}. A direct calculation, combined with \eqref{eq sec20 con near r}, gives
		\begin{equation}\begin{aligned}\label{eq sec20 r decompose}
				\tilde{r}&:=\int rw(x)\,dx  =\int_{|r-r^*|\le \e^{\frac12}} rw(x)\,dx +\int_{|r-r^*|\ge \e^{\frac12}} rw(x)\,dx \\
				&= r^*\int_{|r-r^*|\le \e^{\frac12}} w(x)\,dx+O(\loge).
		\end{aligned}\end{equation}
		Combining this with 
		\begin{equation*}\label{eq sec20 upper bound for r}
			\tilde{r} \le \left(\int w(x)\,dx\right)^{\frac12}\left(\int r^2w(x)\,dx\right)^{\frac12} \le 1+O(\loge),
		\end{equation*}
		we see that $r^* \le 2$. Applying the equality
		\begin{equation}\begin{aligned}\label{eq sec20 r equal}
				\tilde{r}^2+\int |r-\tilde{r}|^2 w(x)\,dx = \int r^2w(x)\,dx=1+O(\loge)
		\end{aligned}\end{equation}
		and noting that $p=\tilde{r}$ is the minimizer of the functional $Q(p):=\int |r-p|^2 w(x,t)\,dx$, we see that
		$$
		\int |r-\tilde{r}|^2 w(x)\,dx \le \int |r-r^*|^2 w(x)\,dx.
		$$
		Therefore, \eqref{eq sec20 con near r} gives (recalling that $r^* \le 2$)
		\begin{align*}
			\int |r-\tilde{r}|^2 w(x)\,dx \le& \int_{|x-x^*|\le \e^{\frac12}} |r-r^*|^2 w(x)\,dx+\int_{|x-x^*|\ge \e^{\frac12}} |r-r^*|^2 w(x)\,dx\\
			\lesssim& \, \int_{|x-x^*|\ge \e^{\frac12}} (r+2)^2 w(x)\,dx + O(\loge) 
			\lesssim \frac{1}{|\log \e|}.
		\end{align*}
		Together with \eqref{eq sec20 con near r}, \eqref{eq sec20 r decompose}, \eqref{eq sec20 r equal} and Assumption \ref{as initial data0}, we conclude $
		r^*=1+O(\loge).$ 
	\end{proof}
	
	\subsection{Control of the vorticity outside $z^*(t)$.}
	
	In this subsection, we will prove that $z^*(t)=V_{\e}t+O(t)+O(\loge)$ with $V_{\e}:=\frac{1}{4\pi} |\log \e|$, under the additional assumption \eqref{as initial data}. Introducing $W$ by the transform $w(x,t)=W(r,z-V_{\e}t,t)$, one can readily check that 
	\begin{equation*}\label{eq 3euler}\left\{\begin{aligned}
			&\partial_t W-V_{\e}\partial_zW+\nabla \cdot (UW)=0 \\
			&W(x,0)=w_0(x)
		\end{aligned}\right. \qquad \mbox{with} \qquad U(x,t) := \int_{\HH} K(x,x')W(x',t)\,dx'
	\end{equation*} 
	and the kernel $K(x,x')$ is given in \eqref{eq biot velocity}. From Theorem \ref{thm sec2 main0}, we immediately obtain  
	\begin{equation}\label{eq sec2 A0}
		\int_{|x-x_*(t)|\ge \e^{\frac12}} (1+r^2)W(x,t) \,dx \le \frac{C_0}{|\log \e|}, \quad \mbox{for} \quad  x_*(t):=(r_*(t),z_*(t))=(r^*(t),z^*(t)-V_{\e}t)
	\end{equation} for any $t \ge 0$. 
	We begin with a technical lemma. 
	
	\begin{lemma}\label{le app2 A0}
		Assuming $W(x,t)$ is a non-negative function satisfying  $$\frac{W(x,t)}{r} \lesssim \frac{1}{\e^2} \qquad \mbox{and}\qquad \int_{\bbH} (1+r^2)W(x,t)\,dx \lesssim 1,$$  then for any non-negative function $g(x,t)$, there holds
		\begin{equation} \label{eq:le app2 A0}
			\left|\iint_{\bbH \times \bbH } F_2(s) \sqrt{rr'} g(x,t)W(x',t) \,dx \,dx'\right| \lesssim |\log\epsilon | \int_{\bbH} (1+r^2)g(x,t)\,dx.
		\end{equation}
	\end{lemma}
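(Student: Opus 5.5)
Since $g\ge 0$, $W\ge0$, and $F_2\ge0$, we can drop the absolute value. By Fubini it then suffices to prove the pointwise bound
\[
\Phi(x):=\int_{\bbH} F_2(s)\sqrt{rr'}\,W(x',t)\,dx' \lesssim |\log\e|\,(1+r^2) \qquad\text{for every } x=(r,z)\in\bbH.
\]
Integrating this against $g(x,t)\,dx$ gives \eqref{eq:le app2 A0}. To estimate $\Phi$, we split the $x'$-integral according to whether $s\ge1$ or $s\le1$, and use Lemma \ref{le se1 Biot savart estimates}.

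\emph{Far region $s\ge1$.} Here $F_2(s)\lesssim s^{-3}\le 1$, so this contribution is at most $\int\sqrt{rr'}\,W\,dx'$. Using $\sqrt{rr'}\le \tfrac12(r+r')$, we get
\[
\int\sqrt{rr'}\,W\,dx' \lesssim r\int W\,dx' + \int r'W\,dx' \lesssim (1+r)\int(1+r'^2)W\,dx' \lesssim 1+r^2 .
\]
No logarithm appears in this part.

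\emph{Near region $s\le 1$.} Here $F_2(s)\lesssim 1+\log\frac1s$. The condition $|x-x'|^2\le rr'$ forces $r'$ to be comparable to $r$: if $r'\ge 4r$, then $|x-x'|\ge r'-r\ge \tfrac34 r'$, which gives $\tfrac{9}{16}r'^2\le rr'$, a contradiction. Symmetrically one excludes $r'\le r/4$. Hence $\sqrt{rr'}\simeq r$ on this region. The constant part of $F_2$ then contributes $\lesssim r\int W \lesssim 1+r^2$. For the logarithmic part, write
\[
\log\frac1s=\log\frac{\sqrt{rr'}}{|x-x'|}=\log\frac{1}{|x-x'|}+\tfrac12\log(rr').
\]
Since $s\le 1$ forces $|x-x'|\le\sqrt{rr'}\lesssim r$, the second term is $\lesssim \log(2+r)$ whenever $\log(rr')\ge 0$, and is harmless (nonpositive) otherwise. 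It therefore contributes at most $r\log(2+r)\int W\lesssim 1+r^2$.

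\emph{Main term.} The main term is $r\int_{|x-x'|\lesssim r}\log\frac{1}{|x-x'|}\,W(x')\,dx'$, which is where the $|\log\e|$ arises. On the region $|x-x'|\ge\e$ the logarithm is at most $|\log\e|$, giving a contribution $\le r|\log\e|\int W\lesssim|\log\e|(1+r^2)$. On the region $|x-x'|\le\e$, we use $W(x')\lesssim r'\e^{-2}\lesssim r\e^{-2}$ (recall $r'\simeq r$) and compute directly:
\[
r\cdot\frac{r}{\e^2}\int_{|y|\le\e}\log\frac1{|y|}\,dy \lesssim r^2\bigl(|\log\e|+1\bigr).
\]
Summing all pieces gives $\Phi(x)\lesssim|\log\e|(1+r^2)$.

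The step needing the most care is the near region when $r$ is small or large. There the comparability $r'\simeq r$, obtained from $s\le1$, is essential: it lets us trade the weight $\sqrt{rr'}$ for $r$, and it converts the $L^\infty$ bound on $W/r$ into $W\lesssim r\e^{-2}$ locally. We must also check that the $\log(rr')$ correction does not produce growth beyond $1+r^2$, which is fine since $r\log(2+r)\lesssim 1+r^2$. If one insists on staying within the stated hypotheses, all these bounds use only $W\ge0$, $W/r\lesssim \e^{-2}$, and $\int(1+r^2)W\lesssim1$, as required.
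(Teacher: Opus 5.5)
Your proof is correct and follows essentially the same route as the paper. Both split into $s\le 1$ and $s\ge 1$, write $\log\frac1s=\log\frac1{|x-x'|}+\log\sqrt{rr'}$, use $\log\frac{1}{|x-x'|}\le|\log\e|$ where $|x-x'|\ge\e$, and on $|x-x'|\le\e$ integrate $W\lesssim r'\e^{-2}$ against the logarithm. The differences are cosmetic: you prove a pointwise bound in $x$ before integrating against $g$, and in the small ball you control $r'$ through the comparability $r'\simeq r$ forced by $s\le 1$, whereas the paper uses $r'\le r+\e$.
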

	
	\begin{proof}
		Denoting the left hand side of \eqref{eq:le app2 A0} by $I$, from Lemma \ref{le se1 Biot savart estimates}, we estimate  
		\begin{equation*}\begin{aligned}
				I & \lesssim \iint_{s \le1} (\frac12 \log \frac{1}{s}+1) \sqrt{rr'}g(x,t)W(x',t)\,dx\,dx' 
				+\iint_{s \ge1} \sqrt{rr'}g(x,t)W(x',t)\,dx\,dx'\\
				& =\iint_{s \le1} (\frac12 \log \frac{1}{|x-x'|}+\log \sqrt{rr'}) \sqrt{rr'}g(x,t)W(x',t)\,dx\,dx' 
				+\iint  \sqrt{rr'}g(x,t)W(x',t)\,dx\,dx' \\
				&\lesssim \iint_{|x-x'| \le1} \log \frac{1}{|x-x'|} \sqrt{rr'}g(x,t)W(x',t)\,dx\,dx' + \iint  (1+\sqrt{rr'})\sqrt{rr'}g(x,t)W(x',t)\,dx\,dx' \\
				& =: I_1+I_2.
		\end{aligned}\end{equation*}
		A direct calculation shows
		$$
		I_2 \lesssim \int (1+r^2)g(x,t)\,dx \int (1+r^2)W(x,t)\,dx \lesssim \int (1+r^2)g(x,t)\,dx.
		$$
		For $I_1$, we split\begin{equation*}
			\begin{split}
				I_1 =  \left[ \iint_{\e \le |x-x'|\le 1} + \iint_{ |x-x'|\le \e} \right]  \log \frac{1}{|x-x'|} \sqrt{rr'}g(x,t)W(x',t)\,dx\,dx'	
				=:I_{11}+I_{12}
			\end{split}
		\end{equation*} 
		and $I_{11}$ can be bounded directly:
		$$
		I_{11}\lesssim |\log \eps| \iint \sqrt{rr'}g(x,t)W(x',t)\,dx\,dx' \lesssim |\log \eps|\int (1+r^2)g(x,t)\,dx.
		$$
		For $I_{12}$, using $\frac{W(x,t)}{r} \lesssim \frac{1}{\e^2}$ and $r'\le r+\e$, we bound 
		\begin{equation*}\begin{aligned}
				I_{12} & \lesssim \frac{1}{\e^2} \int \sqrt{r}g(x,t)\int_{|x-x'|\le \e} (r')^{\frac32} \log \frac{1}{|x-x'|} \,dx' \,dx \\
				& \lesssim \frac{1}{\e^2} \int \sqrt{r}(r+\e)^{\frac32}g(x,t)\int_{|x-x'|\le \e}  \log \frac{1}{|x-x'|} \,dx' \,dx \\
				& \lesssim |\log \eps|\int \sqrt{r}(r+\e)^{\frac32}g(x,t) \,dx \lesssim |\log\eps|\int(1+r^2)g(x,t)\,dx. 
		\end{aligned}\end{equation*} This finishes the proof. \end{proof}
	
	\begin{lemma} Under Assumptions \ref{as initial data0} and \ref{as initial data}, there exists $C_1>0$ independent of $\e$ such that
		\begin{equation*}\label{eq sec2 A3}
			A(t):= \int \langle z \rangle r^2W(x,t)\,dx\le A(0)+C_1t \qquad \mbox{for all}\quad t \ge 0. 
		\end{equation*}
	\end{lemma}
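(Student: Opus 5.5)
The plan is to differentiate $A(t)$ directly using the transport structure of the equation for $W$. Then I show that every resulting term is $O(1)$ uniformly in $t$ and $\eps$. This works either through exact (anti)symmetrization in $(x,x')$, or by using the energy together with the weighted concentration \eqref{eq sec2 A0} from Theorem \ref{thm sec2 main0}.

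\textbf{Step 1: the time derivative.} Since $\partial_t W+\nabla\cdot((U-V_\e\mathbf e_z)W)=0$, integrating by parts against $\varphi=\langle z\rangle r^2$ gives
\[
\dot A(t)=\int \Big(2r\langle z\rangle U^r+r^2\tfrac{z}{\langle z\rangle}(U^z-V_\e)\Big)W\,dx .
\]
This identity is justified first for truncated weights and then passed to the limit, using $\int\langle z\rangle r^2 W<\infty$. Insert \eqref{eq biot velocity}, where $(x-x')^\perp=(z-z',-(r-r'))$. This splits $\dot A$ into three pieces: an $F_1$-piece from $U^r$, an $F_1$-piece from $U^z$, and an $F_2$-piece from $U^z$ combined with $-V_\e r^2 z/\langle z\rangle$.

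\textbf{Step 2: the $F_1$-pieces via antisymmetry.} The $U^r$ contribution equals $\frac1\pi\iint F_1(s)\langle z\rangle\frac{z-z'}{\sqrt{rr'}}WW'$. Symmetrizing in $(x,x')$ turns it into
\[
\frac{1}{2\pi}\iint F_1(s)\big(\langle z\rangle-\langle z'\rangle\big)\frac{z-z'}{\sqrt{rr'}}\,WW'.
\]
Since $|\langle z\rangle-\langle z'\rangle|\le|z-z'|$ and $(z-z')^2/\sqrt{rr'}\le s^2\sqrt{rr'}$, and $s^2F_1(s)$ is bounded by Lemma \ref{le se1 Biot savart estimates}, this is $\lesssim\iint\sqrt{rr'}WW'\lesssim 1$.

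The $F_1$ part of $U^z$ gives $-\frac1{2\pi}\iint F_1 \frac{z}{\langle z\rangle}\frac{r(r-r')}{\sqrt{rr'}}WW'$. After symmetrization the numerator becomes
\[
\tfrac12(r-r')\Big[(r-r')\tfrac{z}{\langle z\rangle}+r'\Big(\tfrac{z}{\langle z\rangle}-\tfrac{z'}{\langle z'\rangle}\Big)\Big],
\]
which is bounded by $|x-x'|^2(1+r')$. Hence this piece is $\lesssim\iint \sqrt{rr'}(1+r')WW'\lesssim 1$, using $\int(1+r^2)W\lesssim1$. This is exactly the place where the $r^2$ weight is needed: the bare $\int|z|W$ terms cancel.

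\textbf{Step 3: the $F_2$/translation piece (main obstacle).} The remaining term is
\[
J:=\int \frac{z}{\langle z\rangle}\Big[\frac1{2\pi}\int F_2(s)\sqrt{rr'}\,W'\,dx'-V_\e r^2\Big]W\,dx .
\]
This term is not small pointwise: lagging vorticity far from the core feels roughly $-V_\e r^2$. I will therefore freeze the weight at the core and write $\frac{z}{\langle z\rangle}=\phi_*+\psi(x)$, where $\phi_*:=z_*/\langle z_*\rangle$. The function $\psi$ satisfies $|\psi|\le\min(2,|z-z_*|)$, so $|\psi|\le\e^{1/2}$ on $\{|x-x_*|\le\e^{1/2}\}$.

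For the $\phi_*$ part, $|\phi_*|\le1$ multiplies the $t$-independent quantity
\[
\frac1{2\pi}\iint F_2\sqrt{rr'}WW'-V_\e M_2 .
\]
By Lemma \ref{le se1 Biot savart estimates}, $F_2(s)$ and $\frac12F(s^2)$ agree to within $O(1+s\log s)$ for $s\le1$ and within $O(1/s)$ for $s\ge1$. So the double integral equals $\frac12E(W)+O(\iint\sqrt{rr'}(1+\sqrt{rr'})WW')=\frac{1}{4\pi}|\log\e|+O(1)$ by Assumption \ref{as initial data0}. Since $V_\e M_2=\frac1{4\pi}|\log\e|$, this part is $O(1)$.

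For the $\psi$ part, apply Lemma \ref{le app2 A0} with $g=|\psi|W$. Together with the bound $V_\e\int r^2|\psi|W$, this gives
\[
\lesssim|\log\e|\int(1+r^2)|\psi|W\,dx\lesssim|\log\e|\Big(\e^{1/2}+\int_{|x-x_*|\ge\e^{1/2}}(1+r^2)W\Big)\lesssim 1
\]
by \eqref{eq sec2 A0}. This is precisely where the sharpened $(1+r^2)$-weighted confinement of Theorem \ref{thm sec2 main0}, rather than Proposition \ref{prop:Jerrard}, is indispensable.

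\textbf{Conclusion.} Collecting the pieces gives $\dot A(t)\le C_1$, and integrating in time yields $A(t)\le A(0)+C_1t$.
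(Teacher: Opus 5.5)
Your proposal is correct and follows the paper's proof essentially step for step: you use the same antisymmetrization of the two $F_1$ pieces, the same freezing of $z/\la z\ra$ at $z_*$ combined with $|F_2(s)-\tfrac12F(s^2)|\lesssim 1$ and conservation of $E$, and the same use of Lemma \ref{le app2 A0} together with \eqref{eq sec2 A0} for the remainder; the only difference is that you merge the paper's $I_1$ and $I_{22}$ into a single term $J$. One small wording slip: $\frac{1}{2\pi}\iint F_2\sqrt{rr'}\,WW'-V_\e M_2$ is not literally $t$-independent, only $O(1)$ uniformly in $t$ through the conserved quantities $E$, $M_0$, $M_2$, and that uniform bound is all your argument actually uses.
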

	\begin{proof}
		It suffices to prove that
		$
		\frac{dA(t)}{dt} \lesssim 1.
		$
		A direct calculation gives (omitting $t$ for simplicity)
		\begin{equation*}\begin{aligned}
				\frac{dA(t)}{dt}&=-V_{\e} \int \frac{z}{\la z \ra} r^2W \,dx+ \int \frac{z}{\la z\ra} r^2U^zW\,dx +2\int r\la z \ra U^rW\,dx =: I_1+I_2+I_3.
		\end{aligned}\end{equation*}
		
		\medskip
		\noindent\textbf{Estimates for $I_1$.} Using Assumption \ref{as initial data0}, we compute that
		\begin{equation*}\begin{aligned}
				I_1 =&-V_{\e}\int \left( \frac{z}{\la z \ra}-\frac{z_*}{\la z_* \ra} \right)r^2W\,dx-V_{\e}\frac{z_*}{\la z_* \ra}\int r^2W\,dx \\
				=&-V_{\e}\int_{|z-z_*|\le \e^{\frac12}} \left( \frac{z}{\la z \ra}-\frac{z_*}{\la z_* \ra} \right)r^2W\,dx-V_{\e}\int_{|z-z_*|\ge \e^{\frac12}} \left( \frac{z}{\la z \ra}-\frac{z_*}{\la z_* \ra} \right)r^2W\,dx - V_{\e}\frac{z_*}{\la z_* \ra} \\
				=&-V_{\e}\frac{z_*}{\la z_* \ra}+O(\e^{\frac12}V_{\e})-V_{\e}\int_{|z-z_*|\ge \e^{\frac12}} \left( \frac{z}{\la z \ra}-\frac{z_*}{\la z_* \ra} \right)r^2W\,dx.
		\end{aligned}\end{equation*}
		It then follows from \eqref{eq sec2 A0} that
		\begin{equation}\label{eq sec2 A23}
			I_1=-V_{\e}\frac{z_*}{\la z_* \ra}+O(1).
		\end{equation}
		
		\medskip
		\noindent\textbf{Estimates for $I_3$.} Using  anti-symmetry, we may rewrite
		\begin{equation*}\begin{aligned}
				I_3=& \frac{1}{\pi}\iint \la z \ra F_1(s) \frac{z'-z}{\sqrt{rr'}}W(x)W(x')\,dx\,dx'  = \frac{1}{2\pi}\iint \frac{\left(\la z \ra-\la z' \ra \right)}{\sqrt{rr'}} \sqrt{rr'}F_1(s) \frac{z'-z}{\sqrt{rr'}}W(x)W(x')\,dx\,dx'.
		\end{aligned}\end{equation*}
		Note that Lemma \ref{le se1 Biot savart estimates} gives $s^2|F_1(s)| \lesssim 1$, which implies
		\begin{equation}\begin{aligned}\label{eq sec2 A25}
				|I_3| \lesssim\iint \sqrt{rr'}W(x)W(x')\,dx \lesssim1.
		\end{aligned}\end{equation}
		
		\medskip
		\noindent\textbf{Estimates for $I_2$.} We decompose
		\begin{equation*}\begin{aligned}
				I_2=& \frac{1}{2\pi} \iint F_1(s) \frac{r-r'}{r\sqrt{rr'}}\frac{z}{\la z \ra} r^2W(x)W(x')\,dx\,dx'\\
				&+\frac{1}{2\pi} \iint F_2(s) \frac{1}{r}\sqrt{\frac{r'}{r}}\frac{z}{\la z \ra}r^2 W(x)W(x')\,dx\,dx' =: I_{21}+I_{22}.
		\end{aligned}\end{equation*}
		For $I_{21}$, we have
		$$
		I_{21}=\frac{1}{2\pi} \iint F_1(s) \frac{r-r'}{\sqrt{rr'}}\left(\frac{z}{\la z \ra} r-\frac{z'}{\la z' \ra} r' \right)W(x)W(x')\,dx\,dx'.
		$$
		Recall from Lemma \ref{le se1 Biot savart estimates} that $s^2|F_{1}(s)|\lesssim 1$, we obtain
		\begin{equation*}\begin{aligned}
				|I_{21}| \lesssim& \iint F_1(s) \frac{|r-r'|}{\sqrt{rr'}}\left(|r-r'|+r'|z-z'| \right)W(x)W(x')\,dx\,dx' \\
				\lesssim& \iint s^2F_1(s) (1+r')\sqrt{rr'}W(x)W(x')\,dx\,dx'
				\lesssim \iint (1+r^2+(r')^2)W(x)W(x')\,dx\,dx' \lesssim1.
		\end{aligned}\end{equation*}
		For $I_{22}$, we estimate
		\begin{equation*}\begin{aligned}
				I_{22}=&\frac{1}{2\pi} \iint F_2(s) \sqrt{rr'}\frac{z}{\la z \ra} W(x)W(x')\,dx\,dx' \\
				=&\frac{z_*}{\la z_*\ra}\frac{1}{2\pi} \iint F_2(s) \sqrt{rr'} W(x)W(x')\,dx\,dx' \\
				&+\frac{1}{2\pi} \iint F_2(s) \sqrt{rr'}\left(\frac{z}{\la z \ra}-\frac{z_*}{\la z_* \ra}\right) W(x)W(x')\,dx\,dx' =: I_{221}+I_{222}.
		\end{aligned}\end{equation*}
		It follows from Lemma \ref{le se1 Biot savart estimates} that $|F_2(s)-\frac{1}{2}F(s^2)|\lesssim 1$, which yields
		\begin{equation*}\begin{aligned}
				I_{221}=& \frac{z_*}{\la z_*\ra}\frac{1}{4\pi} \left(\iint F(s^2) \sqrt{rr'} W(x)W(x')\,dx\,dx'+O(1)\right) \\
				=& \frac{z_*}{2\la z_*\ra}E(0)+O(1)
				=V_{\e}\frac{z_*}{\la z_*\ra}+O(1).
		\end{aligned}\end{equation*}
		For $I_{222}$, it follows from Lemma \ref{le app2 A0} that
		$$
		|I_{222}| \lesssim |\log\eps| \int (1+r^2)g(x,t)\,dx, \qquad\mbox{where}\qquad
		g(x,t):=\left|\frac{z}{\la z \ra}-\frac{z_*}{\la z_* \ra}\right| W(x,t).
		$$
		Using Assumption \ref{as initial data0} and  \eqref{eq sec2 A0}, we obtain $|I_{222}|\lesssim 1$, which proves
		\begin{equation*} 
			I_2=V_{\e}\frac{z_*}{\la z_*\ra}+O(1).
		\end{equation*}
		Combining this with \eqref{eq sec2 A23} and \eqref{eq sec2 A25}, the proof is complete.
	\end{proof}

	\begin{proof}[{Proof of Theorem \ref{thm dynamical stable}}]
		We now prove part (b) of Theorem \ref{thm dynamical stable}, which together with Theorem \ref{thm sec2 main0} concludes the proof of Theorem \ref{thm dynamical stable}. Recalling that $z_*(t)=z^*(t)-V_{\e}t$, it suffices to prove that $|z_*(t)|\lesssim 1+A(0)+t$. First we observe that
		\begin{equation*}\begin{aligned}
				A(t) & = \int \langle z \rangle r^2W(x,t)\,dx \ge \int_{|z-z_*|\le \e^{\frac12}} \la z \ra r^2 W(x,t)\,dx \\
				& \ge \la z_* \ra \int_{|z-z_*|\le \e^{\frac12}} r^2 W(x,t)\,dx-C\e^{\frac12}  
				\ge \la z_* \ra (1-\frac{C_0}{|\log\eps|})-C\e^{\frac12}.
		\end{aligned}\end{equation*} Combining this  with the previous lemma which gives 
		$
		\int \langle z \rangle r^2W(x,t)\,dx \le A(0)+C_1t,
		$
		we get
		\begin{equation}\label{eq sec2 z*}
			|z_*(t)|^2 \le  {\left(\frac{A(0)+C_1t+C\e^{\frac12}}{1-{C_0}{|\log \eps|^{-1}} } \right)^2-1  },
		\end{equation}
		which implies 
		$
		|z_*(t)| \lesssim 1+A(0)+t. 
		$  \end{proof}

	
	\section{Filamentation and instability}\label{sec:unstable}
	
	The aim of this section is to obtain filamentation and instability. Throughout this section, we take $r_{0} = 1 = \mu$ and suppress the subscript $\eps$ from the data and solutions for simplicity. 
	
	\subsection{Proof of filamentation} In this subsection, we state and prove a slightly more precise version of Theorem \ref{thm filamentation}. 
	
	\begin{theorem}\label{thm sec3 filamentation}
		For $c_{1}, c_{2}, c_{3}, c_{4} > 0$, there exist $\eps_{0},\ell_{0}>0$ such that the following holds. Assume that $w_{0}$ satisfies  \textup{(i)}--\textup{(iv)} from Theorem \ref{thm dynamical stable} for some $0<\eps\le\eps_{0}$ with $r_{0} = 1 = \mu$ and admits a decomposition of the form \begin{equation*}\label{eq:fila-assume2}
			\begin{split}
				w_{0} = w_{m,0} + w_{d,0} \quad \mbox{with} \quad w_{m,0}, w_{d,0} \ge 0 \quad\mbox{and} \quad 0 < \left\Vert \frac{w_{d,0}}{r} \right\Vert_{L^{\infty}} \le \frac{|\log \eps|^{2}}{c_{d}}\nrm{ w_{d,0} }_{L^{1}} ,  
			\end{split}
		\end{equation*} for some $ c_{d} > 0$. For the solution $w(\cdot,t)$ corresponding to the initial datum $w_{0}$, denote by $u(\cdot,t)$ the associated velocity. Then, $w_{d}(\cdot,t)$ defined by the solution of \begin{equation*}
			\begin{split}
				\rd_{t} w_{d} + \nb \cdot ( u w_{d} ) = 0, \qquad w_{d}(t=0) = w_{d,0}
			\end{split}
		\end{equation*} satisfies, with $Z_{d}(t) := \nrm{ w_{d}(\cdot,t)}_{L^{1}}^{-1} \int_{\bbH} z w_{d}(x,t) dx$, \begin{equation}\label{eq:Z-d-slow}
			\begin{split}
				Z_{d}(t) \le Z_{d}(0) + \ell_{0}( c_{d}^{-\frac12} |\log\eps| + c_{d}^{-1} + 1) t \qquad \mbox{for all}\qquad t\ge0. 
			\end{split}
		\end{equation} 
	\end{theorem}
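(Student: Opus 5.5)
The plan is to differentiate $Z_{d}$ in time and bound the resulting velocity average by a quantity that is uniform in time. Since $w_{d}$ is transported by the same velocity $u$ as $w$, the quantities $\nrm{w_{d}(\cdot,t)}_{L^1}$ and $\nrm{w_d(\cdot,t)/r}_{L^\infty}$ are conserved. The Lagrangian relation $(w_d/r)\circ X = w_{d,0}/r$ gives the second one. Moreover $0\le w_d(\cdot,t)\le w(\cdot,t)$, because $w_m$, solved the same way, stays nonnegative. Writing $m:=\nrm{w_{d,0}}_{L^1}$ and $L:=\nrm{w_{d,0}/r}_{L^\infty}\le c_d^{-1}|\log\eps|^2 m$, we have
\begin{equation*}
\frac{d}{dt}Z_d(t) = \frac1m\int_{\bbH} u^z(x,t)\,w_d(x,t)\,dx = \frac1m\iint K^z(x,x')\,w_d(x,t)\,w(x',t)\,dx\,dx'.
\end{equation*}
It therefore suffices to show that, uniformly in $x'$ (with a weight in $r'$ that $\int(1+r'^2)w\,dx'\lesssim1$ can absorb),
\begin{equation*}
\int K^z(x,x')\,w_d(x)\,dx\lesssim m\bigl(c_d^{-1/2}|\log\eps|+c_d^{-1}+1\bigr)(1+r').
\end{equation*}
Only an upper bound is needed.

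From \eqref{eq biot velocity}, $K^z(x,x') = \frac1{2\pi}\bigl[-F_1(s)\frac{r-r'}{r\sqrt{rr'}}+F_2(s)\frac1r\sqrt{r'/r}\bigr]$. I would split the $x$-integral into the near region $s\le1$ and the far region $s\ge1$.

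In the far region, Lemma \ref{le se1 Biot savart estimates} gives $F_1,F_2=O(s^{-3})$. Combined with $|r-r'|\le s\sqrt{rr'}$, this yields $|K^z|\lesssim s^{-2}r^{-3/2}r'^{1/2}\le r^{-1/2}r'^{-1/2}\cdot r'^{1/2}r^{-1}\cdot(\dots)$. Since $s\ge1$ forces $|x-x'|^2\ge rr'$, the whole term is bounded by $\lesssim (1+r')\int (1+r^{-1}\text{-weights}) w_d$. I would bound this by $m$ plus near-axis pieces controlled via $w_d\le Lr$. This step is routine bookkeeping.

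In the near region, $s\le1$ implies $r\simeq r'$ (indeed $|r-r'|\le\sqrt{rr'}$ forces $r'/r\in[c,C]$). The expansions of Lemma \ref{le se1 Biot savart estimates} then give
\begin{equation*}
|K^z(x,x')|\lesssim \frac{1}{|x-x'|}+\frac1r\Bigl(\log\frac{\sqrt{rr'}}{|x-x'|}+1\Bigr).
\end{equation*}
For the first term I would use the rearrangement bound: among densities with $L^1$ mass $\le m$ and pointwise bound $\le Lr\simeq Lr'$ on the relevant set, $\int w_d(x)|x-x'|^{-1}dx$ is maximized by a disc. This gives $\lesssim \sqrt{Lr'\,m}\lesssim \sqrt{r'}\,c_d^{-1/2}|\log\eps|\,m$, which is exactly the leading term in \eqref{eq:Z-d-slow}.

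For the logarithmic term, the same rearrangement gives
\begin{equation*}
\int \log\frac{1}{|x-x'|}\,w_d\,dx\lesssim m\bigl(1+\log_+(Lr'/m)\bigr)\lesssim m\bigl(1+\log|\log\eps|+\log_+ c_d^{-1}+\log_+ r'\bigr).
\end{equation*}
This is $\lesssim m\,(c_d^{-1/2}|\log\eps|+c_d^{-1}+1)(1+r')$. The remaining factor $\frac1r\log\sqrt{rr'}$ is harmless since $r\simeq r'$, with the $1/r$ offset by $w_d\le Lr$ when $r$ is small.

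Integrating the $x'$-bound against $w(x')$ and using $M_0,M_2\lesssim1$ gives $\dot Z_d\le \ell_0(c_d^{-1/2}|\log\eps|+c_d^{-1}+1)$, and integrating in time yields \eqref{eq:Z-d-slow}. The main obstacle is the near-axis region $r,r'\ll1$, where the $r^{-1}$ and $r^{-3/2}$ weights in $K^z$ could destroy the uniform bound. There I would first try to use $w_d\le Lr$ together with $r\simeq r'$ inside $\{s\le1\}$, so that every negative power of $r$ is compensated either by the pointwise bound on $w_d$ or by the factor $\sqrt{r'}$ paired with $w(x')$. If this falls short by a logarithm, I would fall back on Lemma \ref{le app2 A0}-type splitting at $|x-x'|=\eps$.
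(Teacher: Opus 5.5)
Your overall strategy is sound and differs from the paper's. You bound $\int K^z(x,x')w_d(x)\,dx$ uniformly in $x'$, up to a factor $(1+r')$, using only the conserved quantities $m=\nrm{w_d}_{L^1}$ and $L=\nrm{w_d/r}_{L^\infty}$ together with $M_0,M_2\lesssim1$. However, the far-region step, which you call routine bookkeeping, does not close as sketched.

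\textbf{The far-region gap.} On $\{s\ge1\}$, Lemma \ref{le se1 Biot savart estimates} gives
\begin{equation*}
|K^z|\lesssim r^{-1}s^{-2}+r^{-3/2}r'^{1/2}s^{-3}.
\end{equation*}
If you use $s\ge1$ only to discard the powers of $s$, you are left with $\int_{\{s\ge1\}}r^{-1}w_d\,dx$ (or worse), and this is not controlled by $m$ and $L$. For example, the tube $w_d=Lr\mathbf{1}_{\{r<\rho,\,|z|<Z\}}$ with $L\rho^2Z=m$ lies entirely in $\{s\ge1\}$ when $r'=1$, yet $\int r^{-1}w_d\,dx=2m/\rho$. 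Appealing to $w_d\le Lr$ near the axis does not help: it only trades this for $L$ times the area of an unbounded region.

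\textbf{The fix.} The missing observation is that $s\ge1$ gives both $|x-x'|\ge\sqrt{rr'}$ and $|x-x'|\ge|r-r'|$, hence $|x-x'|\ge\frac12\max(r,r')$. Keeping the decay then yields
\begin{equation*}
|K^z(x,x')|\lesssim\frac{r'}{|x-x'|^2}+\frac{r'^2}{|x-x'|^3}\lesssim\frac{1}{|x-x'|}\qquad\text{on }\{s\ge1\}.
\end{equation*}

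Likewise, on $\{s\le1\}$ the logarithmic part needs no separate rearrangement. (Your displayed bound for it also does not track the prefactor $1/r\simeq1/r'$.) Since $1+\log\frac1s\le\frac1s$ and $r\simeq r'$ there, you get $\frac1r\bigl(1+\log\frac1s\bigr)\le\frac{1}{rs}\lesssim\frac{1}{|x-x'|}$.

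Hence $|K^z(x,x')|\lesssim|x-x'|^{-1}$ on all of $\bbH\times\bbH$. Lemma \ref{le appendix A2} then gives $\int|K^z(x,x')|\,w_d(x)\,dx\lesssim(1+r')\sqrt{mL}+m$. Integrating against $w(x')$ with $M_0,M_2\lesssim1$ gives $|\dot Z_d|\lesssim\sqrt{L/m}+1\le c_d^{-1/2}|\log\eps|+1$, which is stronger than what is claimed.

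\textbf{Comparison with the paper.} With this repair, your argument is a genuinely different and somewhat simpler proof. The paper splits according to whether $r\in[1/3,3]$ and $r'\in[1/2,2]$. At short range it uses only the crude bounds $|F_1(s)|\lesssim s^{-2}$ and $|F_2(s)|\lesssim s^{-1}$. These leave a factor $r'/(r|x-x'|)$ that is large when $r\ll r'$.

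In the region $\Omega_3$, where neither $r$ nor $r'$ is near $1$, the paper compensates in two ways:
\begin{itemize}
\item it uses the dynamical concentration estimate of Theorem \ref{thm sec2 main0}, namely that the weighted mass of $w(x')$ outside $r'\in[1/2,2]$ is $O(|\log\eps|^{-1})$;
\item it introduces a cutoff at $|x-x'|=R$, optimized in terms of $m$ and $L$.
\end{itemize}
This produces the extra, harmless term $c_d^{-3/4}|\log\eps|^{1/2}$.

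Your route instead exploits the finer structure of the kernel: $r\simeq r'$ near the diagonal, and decay away from it. It therefore needs neither the concentration result nor the region decomposition. The paper's route avoids the diagonal analysis of the kernel, at the cost of relying on Theorem \ref{thm sec2 main0}.
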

	
	We shall need a simple lemma, which is proved in the Appendix.
	\begin{lemma}\label{le appendix A2}
		For any $x' \in \bbH$, we have the bound 
		\begin{equation*}
			\left|\int_{\mathbb{H}}\frac{1}{|x'-x|}f(x)\,dx\right| \lesssim (1+r')\|f\|_{L^1}^{\frac12}\left\|\frac{f}{r}\right\|_{L^{\infty}}^{\frac12}+\|f\|_{L^1}.
		\end{equation*}
	\end{lemma}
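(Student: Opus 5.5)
The plan is the usual near/far splitting of the singular kernel $|x'-x|^{-1}$ at a radius $\rho>0$ to be optimized. Throughout, write $L:=\|f/r\|_{L^\infty}$ and $N:=\|f\|_{L^1}$; we may assume $0<L,N<\infty$, otherwise there is nothing to prove.

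\emph{Far region.} On $\{|x-x'|\ge \rho\}$ we bound the kernel by $\rho^{-1}$, which gives
\[
\int_{|x-x'|\ge\rho}\frac{|f(x)|}{|x-x'|}\,dx\le \frac{N}{\rho}.
\]
\emph{Near region.} On $\{|x-x'|<\rho\}$ we have $r\le r'+\rho$, hence $|f(x)|\le rL\le (r'+\rho)L$. Since $|x-x'|^{-1}$ is locally integrable in two dimensions, with $\int_{|y|<\rho}|y|^{-1}\,dy=2\pi\rho$, we get
\[
\int_{|x-x'|<\rho}\frac{|f(x)|}{|x-x'|}\,dx\le 2\pi\rho\,(r'+\rho)\,L.
\]
Altogether,
\[
\left|\int_{\mathbb H}\frac{f(x)}{|x'-x|}\,dx\right|\lesssim \frac{N}{\rho}+\rho\, r' L+\rho^2 L.
\]

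\emph{Choice of $\rho$.} Balancing $N/\rho$ against $\rho^2L$ and $\rho r'L$ suggests $\rho=(N/L)^{1/2}$, but only when this is at most $1$; otherwise we cap at $\rho=1$. We split into two cases.

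\begin{enumerate}
\item If $N\le L$, take $\rho=(N/L)^{1/2}\le 1$. Then
\[
\frac{N}{\rho}=(NL)^{1/2},\qquad \rho\, r'L=r'(NL)^{1/2},\qquad \rho^2L=N.
\]
The sum is $\lesssim (1+r')N^{1/2}L^{1/2}+N$, as claimed.
\item If $N>L$, take $\rho=1$. Then the far part is at most $N$, and the near part is $\lesssim (r'+1)L$. Since $L=L^{1/2}L^{1/2}<L^{1/2}N^{1/2}$, this gives $(r'+1)L\le(1+r')N^{1/2}L^{1/2}$, and the bound follows again.
\end{enumerate}

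There is no real obstacle here. The only point needing care is the cap $\rho\le1$: it is what keeps the $\rho^2L$ contribution from exceeding $N$, and it accounts for the additive $\|f\|_{L^1}$ term. The factor $(1+r')$ comes from the weight $r\le r'+\rho$ in the near region.
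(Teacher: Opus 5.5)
Your proof is correct, but it is organized differently from the paper's. The paper splits the function rather than the domain. In the notation of the statement ($x=(r,z)$ integrated, $x'=(r',z')$ fixed), it writes $f=f\mathbf{1}_{\{r\le 1\}}+\frac{f}{r}(r-r')\mathbf{1}_{\{r\ge1\}}+r'\,\frac{f}{r}\mathbf{1}_{\{r\ge 1\}}$. It then applies the standard potential bound $\left|\int\frac{g(x)}{|x-x'|}\,dx\right|\lesssim\|g\|_{L^1}^{1/2}\|g\|_{L^\infty}^{1/2}$ to the first and third pieces. Both have $L^1$ norm at most $\|f\|_{L^1}$ and $L^\infty$ norm at most $\|f/r\|_{L^\infty}$, and the third carries the factor $r'$. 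The middle piece is handled by $|r-r'|\le|x-x'|$, which cancels the singularity and produces the additive $\|f\|_{L^1}$ term.

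Your argument instead inlines the proof of that standard bound, namely the near/far splitting at an optimized radius. You handle the weight by the localization $r\le r'+\rho$ in the near region, so the $\|f\|_{L^1}$ term arises as $\rho^2L$ rather than from a Lipschitz cancellation. Your version is self-contained and avoids the three-piece bookkeeping; the paper's is shorter once the standard bound is taken for granted.

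One small correction to your closing remark: the cap $\rho\le1$ and the case split are not needed. The choice $\rho=(N/L)^{1/2}$ gives $\rho^2L=N$ exactly for all $N,L$, so the three terms are $(NL)^{1/2}$, $r'(NL)^{1/2}$ and $N$ in every case. The cap is harmless, but it is not what controls the $\rho^2L$ contribution.
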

	
	\begin{proof}[Proof of Theorem \ref{thm sec3 filamentation}]
		We decompose  
		\begin{equation*}\begin{aligned}
				\frac{d}{dt} \int zw_d(x,t)\,dx & = \int u^z(x,t)w_d(x,t)\,dx  = \iint K^z(x,x')w_d(x,t) \,w(x',t) \,dx \,dx' = \sum_{i=1}^3I_i,
			\end{aligned}
		\end{equation*}
		where
		$$
		I_i:= \iint_{\Omg_{i}} K^z(x,x')w_d(x,t) \,w(x',t) \,dx \,dx' = \int_{\bbH} \, \left( \int_{ \Omg_{i} \cap (\bbH \times \{ x' \}) }  K^z(x,x')w_d(x,t) \, dx \right)  w(x',t) \, dx' 
		$$
		with  \begin{equation*}
			\begin{split}
				\Omg_{1} & := \left\{ (x,x') \,: \, r \in [1/3,3] \right\}, \\  \Omg_{2} &  := \left\{ (x,x') \,: \, r \in (0,1/3) \cup (3,\infty) , \, r' \in [1/2,2]  \right\}, \\ \Omega_3 & :=\mathbb{H}\times \mathbb{H}\setminus (\Omega_1 \cup \Omega_2).
			\end{split}
		\end{equation*} 
		Using \eqref{eq biot velocity}, we obtain 
		\begin{equation}\label{eq sec3 M001}
			|K(x,x')|\lesssim  |x-x'|\left| F_1(s)r^{-\frac32}r'^{-\frac12} \right|+\left| F_2(s)r^{-\frac32}r'^{\frac12} \right|.
		\end{equation}
		
		\noindent
		\textbf{Estimate for $I_1$.} We consider $(x,x') \in \Omg_{1}$. Lemma \ref{le se1 Biot savart estimates} gives for every $s$ that 
		\begin{equation}\label{eq sec3 M002}
			|F_1(s)| \lesssim \frac{rr'}{|x-x'|^2} \quad \mbox{and} \quad |F_2(s)|\lesssim \frac{\sqrt{rr'}}{|x-x'|},
		\end{equation}
		which implies, recalling that $r \in [1/3,3]$ in $\Omega_1$,
		$$
		|K(x,x')|\lesssim \frac{r'^{\frac12}+r'}{|x-x'|}\lesssim \frac{1+r'}{|x-x'|}.
		$$
		Therefore, Lemma \ref{le appendix A2} yields
		\begin{equation}\begin{aligned}\label{eq sec3 N009}
				|I_1|  & \lesssim \|w_d\|_{L^1}^{\frac12}\left\| \frac{w_d}{r} \right\|_{L^{\infty}}^{\frac12} \, \int (1+r')^2 w(x',t)\,dx' +\int(1+r')w(x',t)\,dx'\int w_d(x,t)\,dx \\
				&\lesssim \|w_d\|_{L^1}^{\frac12}\left\| \frac{w_d}{r} \right\|_{L^{\infty}}^{\frac12}+\|w_d\|_{L^1}.
		\end{aligned}\end{equation}
		
		\noindent
		\textbf{Estimate for $I_2$.} We consider $(x,x') \in \Omg_{2}$; note that $|x-x'|\gtrsim 1$. Lemma \ref{le se1 Biot savart estimates} gives
		\begin{equation}\label{eq sec3 M004}
			|F_1(s)| \lesssim \frac{r^{\frac32}r'^{\frac32}}{|x-x'|^3} \quad \mbox{and} \quad |F_2(s)|\lesssim \frac{r^{\frac32}r'^{\frac32}}{|x-x'|^3}.
		\end{equation}
		Combining $|x-x'|\gtrsim 1$ with \eqref{eq sec3 M001}, we obtain $|K(x,x')| \lesssim r' +r'^2$. Thus,
		\begin{equation}\begin{aligned}\label{eq sec3 N008}
				|I_2| \lesssim \int w_d(x,t)\,dx \int r'w(x',t)\,dx'+\int w_d(x,t)\,dx \int (r')^2w(x',t)\,dx'
				\lesssim \|w_d\|_{L^1}.
		\end{aligned}\end{equation}
		
		\noindent
		\textbf{Estimate for $I_3$.}
		For some $R \ge 0$ to be determined, we decompose $\Omega_3 = \Omega_{3,1}\cup \Omega_{3,2}$
		where
		$$
		\Omega_{3,1} := \Omega_3 \cap \left\{ (x,x') \, : \, |x-x'|\le R \right\} \quad \mbox{and} \quad \Omega_{3,2} := \Omega_3 \cap \left\{ (x,x') \, : \, |x-x'|> R  \right\}.
		$$
		Then we set $I_3=I_{3,1}+I_{3,2}$, correspondingly. For $I_{3,1}$, using \eqref{eq sec3 M001} and \eqref{eq sec3 M002}, we get
		\begin{equation*}
			\begin{aligned}
				|I_{3,1}| & \lesssim \iint_{\Omega_{3,1}} \left( \frac{r'^{\frac12}r^{-\frac12}}{|x-x'|}+\frac{r'r^{-1}}{|x-x'|} \right)w_d(x,t) w(x',t)\,dx\,dx'\\
				&\lesssim \iint_{\Omega_{3,1}} \frac{1+r'r^{-1}}{|x-x'|} w_d(x,t) w(x',t)\,dx\,dx'\\
				&= \iint_{\Omega_{3,1}} \frac{r'r^{-1}}{|x-x'|} w_d(x,t) w(x',t)\,dx\,dx'+\iint_{\Omega_{3,1}} \frac{1}{|x-x'|} w_d(x,t) w(x',t)\,dx\,dx' =: I_{3,1,1}+I_{3,1,2}.
			\end{aligned}
		\end{equation*}
		For $I_{3,1,1}$, we have 
		\begin{equation}\label{eq sec3 N000}
			\begin{aligned}
				I_{3,1,1} &\lesssim \int_{r'<\frac12 \,\, or \,\, r'>2} \left( \int_{|x-x'|\le R} \left\| \frac{w_d}{r} \right\|_{L^{\infty}} \frac{1}{|x-x'|} \,dx \right) \, r'w(x',t)\,dx'\\
				&\lesssim R\left\| \frac{w_d}{r} \right\|_{L^{\infty}}\int_{r'<\frac12 \,\, or \,\, r'>2} r'w(x',t)\,dx' 
				\lesssim R|\log \e|^{-1}\left\| \frac{w_d}{r} \right\|_{L^{\infty}},
			\end{aligned}
		\end{equation}
		where we have used Theorem \ref{thm dynamical stable} in the last inequality. For $I_{3,1,2}$, it follows from Lemma \ref{le appendix A2} that
		\begin{equation}\begin{aligned}\label{eq sec3 N001}
				|I_{3,1,2}| \lesssim& \int (1+r') w(x',t)\,dx'\|w_d\|_{L^1}^{\frac12}\left\| \frac{w_d}{r} \right\|_{L^{\infty}}^{\frac12}+\int w(x',t)\,dx'\int w_d(x,t)\,dx \\
				\lesssim&\, \|w_d\|_{L^1}^{\frac12}\left\| \frac{w_d}{r} \right\|_{L^{\infty}}^{\frac12}+\|w_d\|_{L^1}.
		\end{aligned}\end{equation}
		For $I_{3,2}$, using \eqref{eq sec3 M001}, \eqref{eq sec3 M004} and Theorem \ref{thm dynamical stable}, we get
		\begin{equation}\label{eq sec3 N002}
			\begin{aligned}
				|I_{3,2}| \lesssim& \iint_{\Omega_{3,2}} \left( \frac{r'}{|x-x'|^2}+\frac{r'^2}{|x-x'|^3} \right)w_d(x,t) w(x',t)\,dx\,dx'\\
				\lesssim&\left(R^{-2}+R^{-3}\right)|\log \e|^{-1}\|w_d\|_{L^1} 
				\lesssim \left(1+R^{-3}\right)|\log \e|^{-1}\|w_d\|_{L^1}.
			\end{aligned}
		\end{equation}
		Set $R=\|w_d\|_{L^1}^{\frac14}\| \frac{w_d}{r}\|_{L^{\infty}}^{-\frac14}$, together with \eqref{eq sec3 N000}, \eqref{eq sec3 N001} and \eqref{eq sec3 N002}, we obtain
		\begin{equation}\label{eq sec3 N007}
			|I_3| \lesssim 
			\|w_d\|_{L^1}^{\frac14}\| \frac{w_d}{r}\|_{L^{\infty}}^{\frac34}|\log \e|^{-1}+\|w_d\|_{L^1}^{\frac12}\left\| \frac{w_d}{r} \right\|_{L^{\infty}}^{\frac12}+\|w_d\|_{L^1}.
		\end{equation}
		Using \eqref{eq sec3 N009}, \eqref{eq sec3 N008} and \eqref{eq sec3 N007}, we arrive at
		\begin{equation*}
			\left|\frac{d}{dt} \int zw_d(x,t)\,dx \right| \lesssim |I_1|+|I_2|+|I_3| \lesssim 
			\|w_d\|_{L^1}^{\frac14}\| \frac{w_d}{r}\|_{L^{\infty}}^{\frac34}|\log \e|^{-1}+\|w_d\|_{L^1}^{\frac12}\left\| \frac{w_d}{r} \right\|_{L^{\infty}}^{\frac12}+\|w_d\|_{L^1},
		\end{equation*}
		which implies
		\begin{equation*}\label{eq sec3 main}
			| \dot{Z}_{d} | \lesssim \|w_d\|_{L^1}^{-\frac34}\| \frac{w_d}{r}\|_{L^{\infty}}^{\frac34}|\log \e|^{-1}+\|w_d\|_{L^1}^{-\frac12}\left\| \frac{w_d}{r} \right\|_{L^{\infty}}^{\frac12}+1.
		\end{equation*}
		Now based on our assumption on $w_{d,0}$, we conclude that 
		\begin{equation*}
			|\dot{Z}_{d}(t)| \lesssim c_{d}^{-\frac34}|\log \epsilon|^{\frac12}+c_{d}^{-\frac12}|\log \epsilon|+1 \lesssim c_{d}^{-\frac12}|\log \epsilon| + c_{d}^{-1} + 1
		\end{equation*} and we denote by $\ell_{0}$ the implicit constant, which does not depend on $c_{d}$ and $\eps$.
	\end{proof}
	
	\begin{proof}[Proof of Theorem \ref{thm filamentation}]
		We assume $r_{0} = 1 = \mu$ in the proof. From the definition of $Z_{d}(t)$ and \eqref{eq:Z-d-slow}, for any $t$ we deduce that there is a point $x' = (r', z') \in \supp(w_{d}(\cdot,t))$ such that  $z' \le Z_{d}(0) + \ell_{0}c_{d}^{-1/2} |\log\eps|t$. On the other hand, from Theorem \ref{thm dynamical stable}, we know existence of a point $x^{**} = (r^{**}, z^{**}) \in \supp(w(\cdot,t))$ such that $z^{**} \ge \frac{1}{4\pi}|\log \e|t-C_0(1+A_{0,\e}+t)$. In particular, we have that \begin{equation*}
			\begin{split}
				\mathrm{diam}_{z}( \supp(w(\cdot,t)) ) \ge z^{**} - z' \ge \left( \frac{1}{4\pi } - \frac{1 + \ell_{0}(1+c_{d}^{-1})}{|\log \e|} - \ell_{0}c_{d}^{-1/2} \right)|\log\eps|t - Z_{d}(0) - C_{0}(1+A_{0,\e}),
			\end{split}
		\end{equation*}
		concluding the proof provided that $c_d$ is chosen large enough.  
	\end{proof} 
	\begin{proof}[Proof of Theorem \ref{thm filamentation without perturbation}]
		Again, we take $r_{0} = 1 = \mu$. By our assumption in Theorem \ref{thm filamentation without perturbation}, there exist $V \subset U \cap \{ r > c_{5} \}$ and $\eta>0$ such that
		\[
		|V| \ge \frac{\tilde{C}_d}{2 |\log \eps|^2} \quad \text{and} \quad w_{0,\eps}(x) \ge \eta \quad \text{for all } x \in V.
		\]
		We then decompose
		\[
		w_{0,\eps} = \bigl(w_{0,\eps} - \eta \mathbf{1}_V\bigr) + \eta \mathbf{1}_V := w_{m,0,\eps} + w_{d,0,\eps}.
		\]
		It is straightforward to check that
		\[
		0 < \Bigl\| \frac{w_{d,0,\eps}}{r} \Bigr\|_{L^\infty} \le \frac{\eta}{c_5} \le \frac{2 |\log \eps|^2}{c_5 \tilde{C}_d} \, \| w_{d,0,\eps} \|_{L^1}.
		\]
		Choosing $\tilde{C}_d := \frac{2 c_d}{c_5}$ where $c_{d}$ is from the proof of Theorem \ref{thm filamentation} above, the conclusion follows directly from Theorem~\ref{thm dynamical stable} and Theorem~\ref{thm sec3 filamentation}.
	\end{proof}
	
	\subsection{Proof of norm growth} We are now ready to prove Corollary \ref{co filamentation without perturbation} and  Theorem \ref{thm dynamical unstable}. 
	\begin{proof}[Proof of Corollary \ref{co filamentation without perturbation}]
		We take another smooth and simple closed curve $\Gamma_{2}$ such that 
		\begin{itemize}
			\item  $\Gamma_{2}\cap \supp( w_{0,\e} )=\emptyset$ and is supported away from $\{ r = 0 \}$. 
			\item $\overline{D_{1}} \subset D_{2}$, where $D_{j}$ is the open bounded region bounded by the curve $\Gamma_{j}$ for $j = 1, 2$.  
		\end{itemize}
		For simplicity, we write $\xi(r,z,t) := w(r,z,t)/r$, so that it is transported by $u$. It is easy to see that $\nrm{ \rd_{r} \xi }_{L^{\infty}} \le 2\nrm{\rd_{rr} w }_{L^{\infty}}$, and therefore it suffices to show the growth of $\nrm{ \rd_{r} \xi }_{L^{\infty}}$. For any $t > 0$, we define $\Gmm_{j, t} := X( \Gmm_{j}, t )$ for $j = 1, 2$, namely the image of $\Gamma_{j}$ at time $t$ by the flow $X$ generated by $u$. 
		
		For each $t$, we may assume that $\Gmm_{1, t} \subset \left\{ r \ge (t|\log \eps|)^{-1/2} \right\}$, since otherwise we are done by the mean value theorem using $\xi(\Gmm_{1,t},t) \equiv \eta/c_5$ and $\lim_{r\to 0^+} {\xi(r,z,t)} = 0$. Now taking $D_{j,t}$ to be the open set bounded by $\Gamma_{j, t}$,  we have $\overline{D_{1,t}} \subset D_{2,t}$ for all $t>0$. From the conclusion of Theorems \ref{thm dynamical stable} and \ref{thm sec3 filamentation}, we know that there exist two points $x^* = (r^*,z^*)$ and $x^{**} = (r^{**},z^{**}) \in D_{1,t} $ such that $z^* \ge |\log \eps|t/(4\pi)$ while $z^{**} \le |\log \eps|t/(8\pi)$, for all sufficiently large $t$. Then for any $z \in [z^{**},z^*]$, we define \begin{equation*}\label{eq2:R-t-z}
			\begin{split}
				R_{t}(z) := (\bbR_+ \times \left\{ z \right\}) \cap D_{1, t} \ne \emptyset 
			\end{split}
		\end{equation*}
		and define $r_{t}(z)$ by the leftmost point in the set $R_{t}(z)$. Since $D_{1,t} \subset D_{2,t}$, we may consider the rightmost point $r'_{t}(z)$ in $\Gamma_{2,t} \cap ( (0, r_{t}(z)  ) \times \{ z \}  )$ and denote by $J_{t}(z)$ as the interval connecting $(r'_{t}(z)+r_{t}(z))/2$ to $r_{t}(z)$, which satisfies 
		$J_t(z) \subset \left\{ r \ge t|\log \e|^{-1/2}/2 \right\}.$ 
		
		We now define 
		$$
		U_t:=\left\{ (r,z) \Big|\, r \in J_t(z) \quad \text{and}\quad z\in[z^{**},z^*]  \right\}.
		$$ Using $U_t \subset D_{2,t}$ and the conservation of the measure $d\nu := rdrdz$, \begin{equation*}
			\begin{split}
				\frac{(t|\log\eps|)^{-1/2}|U_{t}|}{2} \le \int_{ U_{t} } r drdz \le \int_{ D_{2,t} } r drdz= \nu(D_{2}) ,   
			\end{split}
		\end{equation*} 
		which implies $|U_{t}| \le 2\nu(D_2)(t|\log\eps|)^{1/2}$.

		Since $\xi(\Gmm_{1,t},t) = \eta/c_5$ and $\xi(\Gmm_{2,t},t) = 0$, $\nrm{\rd_{r}\xi(\cdot,t)}_{L^{\infty}} \ge \eta\left(2c_5 |J_{t}(z)|\right)^{-1}$ for each $z \in I_{t}$. Then, \begin{equation*}
			\begin{split}
				2\nu(D_2)(t|\log\eps|)^{1/2}\ge |U_{t}| \ge \int_{z^{**}}^{z^*} |J_{t}(z)| dz \ge  \frac{\eta |\log\e| t}{16 \pi c_5 } \nrm{\rd_{r}\xi(\cdot,t)}_{L^{\infty}}^{-1},
			\end{split}
		\end{equation*}which finishes the proof. 
	\end{proof}
	\begin{proof}[Proof of Theorem \ref{thm dynamical unstable}] 
		It follows from Theorem \ref{thm sec2 main0} that, there exists $x^*=(r^*,z^*)$ with $|r^*-1| \lesssim |\log \eps|^{-1}$ such that \begin{equation*}
			\begin{split}
				\int_{D_1} w_{0,\e}(x)\,dx \ge \frac{1}{2}, \quad \mbox{where} \quad D_1=B_{\frac{C_{2,d}}{|\log \e|}}(x^*)
			\end{split}
		\end{equation*} with $C_{2,d}>0$ being the constant from Theorem \ref{thm filamentation without perturbation} associated with $2c_1, 2c_2, 2c_3, 2c_4$. Next, we define $w_{0} := w_{\star, 0}+\delta w_{\star, 0 ,p}$ and $w_{\star, 0 ,p} := \phi_1+\phi_2$,  where
		\begin{itemize}
			\item $\phi_1$ is a smooth function that is supported in $B_{\frac{2C_{2,d}}{|\log \e|}}(x^*)$ and satisfying $\phi_1 \equiv 1$ in $ D_1$.
			\item Take $\Gamma_{1}$ to be a smooth and simple closed curve containing $\supp( w_{\star,0} + \phi_1 )$ and is away from $\{ r = 0 \}$. Then, we take $\phi_2$ to be a smooth function such that $\phi_2\equiv 1$ in $\Gamma_1$ and supported away from  $\supp( w_{\star,0}+\phi_1 )$.
		\end{itemize}
		The proof is completed by Corollary \ref{co filamentation without perturbation}, provided that $\delta>0$ is chosen small enough.
	\end{proof}

	
	\section{Open Problems}\label{sec:open}
	
	In this final section, we present some related open problems for vortex rings.
	
	\medskip
	\noindent\textbf{Problem 1. Sharp asymptotics of the translation velocity.} 
	Theorem \ref{thm dynamical stable} establishes that for a general class of initial data, the axial translation follows the law $z_{\epsilon}^*(t) = \left( V_{\epsilon}+O(1)\right) t $ as $t \to \infty$. A natural and deep question is whether the $O(1)$ term can be further refined.
	
	For specific steady vortex ring profiles (e.g. those of the form \eqref{eq fraenkel profile} constructed by Fraenkel \cite{Fraenkel1970}), the translation speed is known to admit an expansion of the form $V_{\epsilon} + C_{\zt} + o(1)$, where $C_{\zt}$ is a constant explicitly depending on the internal profile $\zt$. In our setting, where the initial configuration is not necessarily near a steady state, it remains to be determined whether the $O(1)$ correction is uniquely determined by the macroscopic conserved quantities $(M_0, M_2, E)$, or if it retains a persistent dependence on the microscopic distribution of the initial vorticity. 
	
	\medskip
	\noindent\textbf{Problem 2. Optimal geometric threshold for filamentation.} 
	We identified a critical length scale of $O(|\log \epsilon|^{-1})$ for the thickness of the vorticity: beyond this scale, the mismatch between the core's translation and the peripheral induction triggers linear filamentation. This raises the question of the sharpness of this threshold, namely whether filamentation always occur for strictly ``thinner'' vorticities or not. 
	
	\medskip
	\noindent\textbf{Problem 3. Long time dynamics of the ring.} 
	While we prove that a portion of the vorticity is ``shed'' from the core and experiences linear-in-time stretching, its ultimate asymptotic behavior as $t \to \infty$ is unknown. Given the measure-preserving nature of the flow, the filaments must become increasingly thin. One might ask whether these filaments eventually ``mix'' into a diffuse background in the sense of weak-$*$ convergence in $L^\infty$, or if its self-induction leads to a secondary re-organization into smaller, coherent vortex structures. Understanding this cascade or re-aggregation within the framework of the three dimensional axisymmetric Euler remains a challenging open problem.
	
	\appendix
	\section{}
	
	We provide the proofs of a few technical lemmas.
	
	
	\subsection{Proofs of Lemmas \ref{le app B1} and \ref{le feng-sverak}}

	\begin{proof}[Proof of Lemma \ref{le app B1}]
		We estimate, with $s=\frac{|x-x'|}{\sqrt{rr'}}$,
		\begin{equation*}
			\begin{aligned}
				E(w) &  
				\le \frac{1}{2\pi}\iint_{s \le 1} \log \frac{\sqrt{rr'}}{|x-x'|}\sqrt{rr'}w(x)w(x')\,dx\,dx' +C\iint_{s>1} \sqrt{rr'}w(x)w(x')\,dx\,dx' \\
				& \le \frac{1}{2\pi}\iint_{|x-x'| \le 1} \log \frac{1}{|x-x'|}\sqrt{rr'}w(x)w(x')\,dx\,dx' +\frac{1}{2\pi}\iint_{\sqrt{rr'} \ge 1} \sqrt{rr'}\log \sqrt{rr'}\,w(x)w(x')\,dx\,dx'\\
				& \quad + C \iint \sqrt{rr'}w(x)w(x')\,dx\,dx'.
			\end{aligned}
		\end{equation*}
		Using 
		$$
		\sqrt{rr'}\log\sqrt{rr'}\lesssim 1+r^2+(r')^2 \qquad \mbox{		and} \qquad  \sqrt{rr'} \le \frac{2+r^2+(r')^2}{4},
		$$
		it follows from Assumption \ref{as initial data0}, conservation of the total vorticity and second momentum that
		\begin{equation*} 
			E(w)\le  E_1(w)+C\iint(1+r^2+(r')^2)w(x)w(x')\,dx\,dx'
			\le E_1(w)+C.   \qedhere  
		\end{equation*}
	\end{proof}

	\begin{proof}[Proof of Lemma \ref{le feng-sverak}]
		We need to prove  
		$$
		\|Gf\|_{L^{\infty}} \lesssim \|r^2f\|_{L^1}^{\frac14}\|f\|_{L^1}^{\frac14}\|\frac{f}{r} \|_{L^{\infty}}^{\frac12} 
		, \quad\mbox{for}\quad Gf(x) := \int_{\bbH} \log \frac{1}{|x-x'|}\mathbf{1}_{\{|x-x'|\le 1\}}f(x')\,dx'.
		$$ 
		We observe that $$ \log \frac{1}{|x-x'|}\mathbf{1}_{\{|x-x'|\le 1\}} \lesssim {|x-x'|^{-\frac{1}{100}}}.$$ Using generalized H\"older inequality, we obtain
		\begin{equation*}
			\begin{aligned}
				|Gf(x)|\lesssim& \int_{|x-x'|\le 1} |x-x'|^{-\frac{1}{100}} |r^2f|^{\frac14}\left| \frac{f}{r} \right|^{\frac12} |f|^{\frac14}\,dx' \\
				\lesssim& \|r^2f\|_{L^1}^{\frac14}\|f\|_{L^1}^{\frac14}\|\frac{f}{r} \|_{L^{\infty}}^{\frac12} \left(\int_{|x-x'|\le 1} |x-x'|^{-\frac{1}{50}}\,dx'\right)^{\frac12} \lesssim  \|r^2f\|_{L^1}^{\frac14}\|f\|_{L^1}^{\frac14}\|\frac{f}{r} \|_{L^{\infty}}^{\frac12}. 
			\end{aligned}
		\end{equation*} This finishes the proof. 
	\end{proof}
	

	\subsection{Proof of Lemma \ref{le appendix A2}}
	
	For convenience, we define $$Tf(x):=\int_{\mathbb{H}}\frac{1}{|x-x'|}f(x')\,dx'; \quad \mbox{the goal is to prove} \quad |Tf(x)| \lesssim (1+r)\|f\|_{L^1}^{\frac12}\left\|\frac{f}{r}\right\|_{L^{\infty}}^{\frac12}+\|f\|_{L^1}. $$
	
	\begin{proof}[Proof of Lemma \ref{le appendix A2}]
		We decompose $f(x')=f_1(x')+f_2(x,x')+f_3(x,x')$, where
		\begin{equation*}
			\begin{aligned}
				f_{1}(x):=& f(x')\mathbf{1}_{\{r'\le 1\}}, \quad 
				f_2(x,x'):=\frac{f(x')}{r'}(r'-r)\mathbf{1}_{\{r'\ge 1\}},\quad 
				f_3(x,x'):=\frac{f(x')}{r'}r\mathbf{1}_{\{r'\ge 1\}}.
			\end{aligned}
		\end{equation*}
		Then we define $Tf=T_1f+T_2f+T_3f$, correspondingly. Recalling the well-known estimate 
		$$
		|Tg(x)| \lesssim \|g\|_{L^1}^{\frac12}\|g\|_{L^{\infty}}^{\frac12},
		$$
		it follows that 
		\begin{equation*}
			\begin{aligned}
				|T_1f(x)| \lesssim \|f\|_{L^1}^{\frac12}\left\|\frac{f}{r}\right\|_{L^{\infty}}^{\frac12}\qquad \mbox{and} \qquad 
				|T_3f(x)| \lesssim  r\|f\|_{L^1}^{\frac12}\left\|\frac{f}{r}\right\|_{L^{\infty}}^{\frac12},
			\end{aligned}
		\end{equation*}
		where we have used the fact that $|f(x')| \le |\frac{f(x')}{r'}|$ when $r' \le 1$ in the first inequality and
		$|\frac{f(x')}{r'}| \le |f(x')|$ when $r' \ge 1$ in the second inequality. For $T_2f$, it can be check directly that
		$$
		|T_2f(x)| \lesssim \|f\|_{L^1},
		$$
		which completes the proof.
	\end{proof}

	\bibliographystyle{alpha}
	\bibliography{vortexring.bib}

\end{document}